\documentclass[12pt,reqno]{amsart}
\usepackage[a4paper,margin=2.4cm]{geometry}
\usepackage{amssymb,amsmath,amsthm,mathtools}
\usepackage{enumitem}
\usepackage{cases}

\usepackage[colorlinks=true, linkcolor=blue, urlcolor=magenta, hypertexnames=false]{hyperref}

\newcommand{\N}{{\mathbb N}}
\newcommand{\Z}{{\mathbb Z}}
\newcommand{\R}{{\mathbb R}}
\newcommand{\C}{{\mathbb C}}

\newcommand{\dd}{\mathrm d}
\newcommand{\loc}{\mathrm{loc}}

\numberwithin{equation}{section}

\theoremstyle{plain}
\newtheorem{theorem}{\bf Theorem}[section]
\newtheorem{lemma}[theorem]{\bf Lemma}
\newtheorem{proposition}[theorem]{\bf Proposition}

\theoremstyle{definition}
\newtheorem{definition}[theorem]{\bf Definition}

\theoremstyle{remark}
\newtheorem{remark}[theorem]{\bf Remark}

\newcommand{\grad}{{\nabla}}
\newcommand{\lap}{{\Delta}}

\newcommand{\abs}[1]{\left|#1\right|}

\begin{document}

\title[Optimal fractional discrete Hardy weights]{Optimal Hardy weights for the fractional discrete Dirichlet Laplacian on the half-line}

\author{Franti\v{s}ek \v{S}tampach}
\address{Department of Mathematics, Faculty of Nuclear Sciences and Physical Engineering, Czech Technical University in Prague, Trojanova 13, 12000 Prague~2, Czech Republic.}
\email{stampfra@cvut.cz}

\author{Jakub Waclawek}
\address{Department of Mathematics, Faculty of Nuclear Sciences and Physical Engineering, Czech Technical University in Prague, Trojanova 13, 12000 Prague~2, Czech Republic.}
\email{waclajak@cvut.cz}

\subjclass[2020]{Primary 26D15; Secondary 39A12, 47B39}

\keywords{discrete Hardy inequality, fractional Hardy inequality, optimality}

\date{\today}

\begin{abstract}
For positive powers of the discrete Dirichlet Laplacian $(-\lap_{\mathrm{D}})^{\sigma}$, non-trivial Hardy weights exist if and only if $0<\sigma<3/2$. For $0<\sigma\leq 1$, an explicit optimal Hardy weight for $(-\lap_{\mathrm{D}})^\sigma$ was obtained by
Das and de la Fuente-Fern{\'a}ndez. Their ground state argument does not apply when
$1<\sigma<3/2$, because the off-diagonal matrix entries of $(-\lap_{\mathrm{D}})^\sigma$ are no longer all non-positive.
Nevertheless, we show that their result is true for the entire subcriticality range $0<\sigma<3/2$. For $1<\sigma<3/2$,
our argument combines the factorisation
\[
(-\lap_{\mathrm{D}})^{\sigma}=\grad_{\mathrm{D}}^{*}(-\lap_{\mathrm{N}})^{\sigma-1}\grad_{\mathrm{D}},
\]
where $\grad_{\mathrm{D}}$ and $-\lap_{\mathrm{N}}$ are the discrete Dirichlet gradient and the discrete Neumann Laplacian, respectively, with a suitable two-term ground state representation. In addition, we use these arguments to derive another explicit family of optimal Hardy weights for any $\sigma\in(0,3/2)$.
\end{abstract}

\maketitle

\section{Introduction and main results}

\subsection{Introduction}

It was proved in~\cite{ger-kre-sta_25} that a positive power of the
discrete Dirichlet Laplacian $-\lap_{\mathrm{D}}$ on the half-line is
subcritical if and only if its exponent lies in $(0,3/2)$. More precisely,
the non-negative operator $(-\lap_{\mathrm{D}})^\sigma$, defined by the
functional calculus for any $\sigma>0$, admits a non-zero non-negative
sequence $W$ such that
\[
(-\lap_{\mathrm{D}})^\sigma\geq W
\]
in the sense of quadratic forms on $\ell^2(\N)$ if and only if
$0<\sigma<3/2$. Here $W$ is identified with the corresponding
multiplication operator and is called a \emph{Hardy weight} for
$(-\lap_{\mathrm{D}})^\sigma$.

Hardy weights for $(-\lap_{\mathrm{D}})^\sigma$ throughout
$\sigma\in(0,3/2)$ were obtained in~\cite{ger-kre-sta_25}.
More recently, Das and de la Fuente-Fern{\'a}ndez
found an explicit family $W_\sigma^{\mathrm{op}}$ and
proved its optimality for $0<\sigma\leq1$;
see~\cite{das-fer_26} and~\eqref{eq:wop_explicit} below.
Their method relies on the
criticality theory for graph Schr{\"o}dinger operators developed
in~\cite{kel-pin-pog_20}, see also~\cite{kel-pin-pog_18}, which requires the off-diagonal matrix entries of $(-\lap_{\mathrm{D}})^\sigma$ to be non-positive. This condition fails
for $\sigma>1$, restricting their approach to $\sigma\in(0,1]$.
The aim of the present article is to extend their result to
$1<\sigma<3/2$, thereby covering the full subcriticality range of $(-\lap_{\mathrm{D}})^\sigma$. In addition, we deduce
a new family of optimal Hardy weights, distinct from the preceding family for all $\sigma\in(0,3/2)\setminus\{1/2\}$.

A similar obstruction arises in~\cite{sta-wac_26} for a different
half-line discrete fractional Laplacian $(-\lap)^\sigma$, defined by compressing a
power of the full-line discrete Laplacian to $\ell^2(\N)$. The latter
Laplacian is the Laurent operator defined by
\[
(-\lap_{\Z}u)_n=-u_{n-1}+2u_n-u_{n+1},\qquad n\in\Z,
\]
on $\ell^2(\Z)$, and its compressed powers are $(-\lap)^\sigma:=P_+^*(-\lap_{\Z})^\sigma P_+$,
where $P_+:\ell^2(\N)\to\ell^2(\Z)$ is the canonical embedding
given by extension by zero. The operator $(-\lap)^\sigma$ is Toeplitz and
differs from $(-\lap_{\mathrm{D}})^\sigma$ by a Hankel operator;
see~\cite[Remark~2.3]{sta-wac_26}. Unlike
$(-\lap_{\mathrm{D}})^\sigma$, it is subcritical for every $\sigma>0$.
However, its off-diagonal matrix entries also fail to be
non-positive when $\sigma>1$, so the graph Schr{\"o}dinger
operator theory does not apply directly either. Nevertheless, the authors
of~\cite{sta-wac_26} obtained the explicit optimal Hardy weights
\begin{equation}
\label{eq:sta-wac_opt_weight}
W(n)=\frac{4^{\sigma}\Gamma^2(\sigma+1/2)}{\pi}\frac{\Gamma(2n-1)}{\Gamma(2n-1+2\sigma)}, \quad n\in\N,
\end{equation}
for all $\sigma>0$ by deriving a more elaborate ground state representation
with non-negative remainder terms; see~\cite[Theorem~3.4]{sta-wac_26}.

We apply a similar idea here, replacing the direct ground state
representation of $(-\lap_{\mathrm{D}})^\sigma$ by one suited to proving
both the fractional Hardy inequality and optimality; compare
\cite[Proposition~3.4]{das-fer_26} with
Proposition~\ref{prop:two-term-repre} below.
An additional ingredient is the factorisation
\[
(-\lap_{\mathrm{D}})^\sigma
=\grad_{\mathrm{D}}^*(-\lap_{\mathrm{N}})^{\sigma-1}\grad_{\mathrm{D}},
\]
used for $1<\sigma<3/2$; see Lemma~\ref{lem:factorisation}.
It reduces the non-local part of the argument to the Neumann
fractional Laplacian $(-\lap_{\mathrm{N}})^{\sigma-1}$, whose
off-diagonal matrix entries are strictly negative. This allows
us to obtain a two-term ground state representation with
non-negative remainders. A general non-local ground state method for fractional Hardy inequalities, which provides a methodological background for this paper, was
developed by Frank and Seiringer in~\cite{fra-sei_08}.

We briefly recall related results in the continuum, where several
distinct fractional Laplacians can be defined on the positive half-line
$\R_+$. Besides the spectral power of the Dirichlet Laplacian, one can
consider the regional fractional Laplacian, whose quadratic form
involves integration over $\R_+\times\R_+$, and the restricted
fractional Laplacian, obtained by extending functions by zero to $\R$
and using the quadratic form of the full-line fractional Laplacian.
These operators should be distinguished.

The regional and restricted realisations of the half-line fractional Laplacian were studied by Bogdan and Dyda~\cite{bog-dyd_11} for $0<\sigma<1$.
The authors established the sharp Hardy inequality for the regional fractional Laplacian with the
weight
\[
W(x)=
\frac{\Gamma^2(\sigma+1/2)-\Gamma(2\sigma)\sin(\pi\sigma)}{\pi}
\frac{1}{x^{2\sigma}},
\qquad x>0.
\]
The coefficient is positive for $\sigma\neq1/2$ and vanishes at
$\sigma=1/2$. Related fractional boundary Hardy inequalities in the
critical case $\sigma=1/2$, involving logarithmic corrections, were recently
studied in~\cite{roy-sahu_26}. For the restricted fractional Laplacian,
Bogdan and Dyda~\cite{bog-dyd_11} obtained the sharp weight
\[
W(x)=\frac{\Gamma^2(\sigma+1/2)}{\pi}\frac{1}{x^{2\sigma}},
\qquad x>0,
\]
whose coefficient is positive throughout $\sigma\in(0,1)$.

On the full line, both the discrete fractional Laplacian
$(-\lap_{\Z})^\sigma$ and its continuous counterpart
$(-\lap_{\R})^\sigma$ are subcritical if and only if
$0<\sigma<1/2$. In the continuous setting, the classical fractional
Hardy weight for $(-\lap_{\R})^\sigma$ reads
\[
W(x)=4^\sigma
\frac{\Gamma^2\!\left(\frac{1+2\sigma}{4}\right)}
     {\Gamma^2\!\left(\frac{1-2\sigma}{4}\right)}
\frac{1}{|x|^{2\sigma}},
\qquad x\in\R\setminus\{0\},
\]
where the constant is sharp; see~\cite{her_77,yaf_99}.
In the discrete setting, the Hardy weight for $(-\lap_{\Z})^\sigma$ in the form
\[
W(n)=4^\sigma
\frac{\Gamma^2\!\left(\frac{1+2\sigma}{4}\right)}
     {\Gamma^2\!\left(\frac{1-2\sigma}{4}\right)}
\frac{
 \Gamma\!\left(|n|+\frac{1-2\sigma}{4}\right)
 \Gamma\!\left(|n|+\frac{3-2\sigma}{4}\right)}
{\Gamma\!\left(|n|+\frac{1+2\sigma}{4}\right)
 \Gamma\!\left(|n|+\frac{3+2\sigma}{4}\right)},
\qquad n\in\Z
\]
was obtained by Ciaurri and Roncal~\cite{cia-ron_18}. Its optimality for every $\sigma\in(0,1/2)$ was subsequently
proved by Keller and Nietschmann~\cite{kel-nit_23}.

\subsection{Main results}

Recall that we refer to any non-negative sequence $W\neq0$ as a \emph{Hardy weight for} $(-\lap_{\mathrm{D}})^\sigma$ if the inequality
\begin{equation}\label{eq:hardy-ineq}
    \sum_{n=1}^\infty \overline{u_n} (-\lap_{\mathrm{D}})^\sigma u_n \geq \sum_{n=1}^\infty W_n\abs{u_n}^2
\end{equation}
holds for all $u\in\ell^{2}(\N)$. The precise definition of the operator $(-\lap_{\mathrm{D}})^\sigma$ is postponed to Subsection~\ref{subsec:laps}. Before stating our main results, we formulate the exact meaning of the \emph{optimality} used here.
\begin{definition}
    \label{def:optimality}
    A Hardy weight $W$ for $(-\lap_{\mathrm{D}})^\sigma$ is said to be \textit{optimal in} $\ell^{2}(\N)$ if it has the following three properties:
    \begin{enumerate}[label=\normalfont(\roman*)]
        \item
            \textit{Criticality}: $W$ is called \textit{critical} if every Hardy weight $\widetilde W$ for $(-\lap_{\mathrm{D}})^\sigma$ satisfying $\widetilde W_n\geq W_n$ for all $n\in\N$ coincides with $W$.
        \item
            \textit{Non-attainability in }$\ell^{2}(\N)$: $W$ is called \textit{non-attainable in} $\ell^{2}(\N)$ if the inequality in~\eqref{eq:hardy-ineq} is strict for all non-zero $u\in\ell^2(\N)$.
        \item
            \textit{Optimality near infinity}: The weight $W$ is called \textit{optimal near infinity} if for any $M\in\N$ and $\varepsilon>0$ there exists $0\neq u\in \ell^{2}(\N)$ such that $u_n=0$ for all $n<M$ and
        \begin{equation*}
            \sum_{n=1}^\infty\overline{u_n}(-\lap_{\mathrm{D}})^\sigma u_n < (1+\varepsilon) \sum_{n=1}^\infty W_n\abs{u_n}^2.
        \end{equation*}
    \end{enumerate}
\end{definition}

Our first main result is as follows. For $0<\sigma<\alpha<3/2$, we put
\begin{equation}
W_{\alpha,\sigma}(n):=4^\sigma\frac{\Gamma(\frac32-\alpha+\sigma)\Gamma(\alpha)}
{\Gamma(\alpha-\sigma)\Gamma(\frac32-\alpha)}\frac{\Gamma(n+\alpha-\sigma-1)\Gamma(n-\alpha+2)}{\Gamma(n-\alpha+\sigma+2)\Gamma(n+\alpha-1)}.
\label{eq:def_W}
\end{equation}

\begin{theorem}\label{thm:main}
Let $1<\sigma<\alpha<3/2$. Then the following claims hold.
\begin{enumerate}[label=\normalfont(\roman*), itemsep=2pt]
\item $W_{\alpha,\sigma}$ is a Hardy weight for $(-\lap_{\mathrm{D}})^\sigma$.
\item $W_{\alpha,\sigma}$ is critical if and only if $\alpha\leq \alpha_\sigma^*:=(3+2\sigma)/4$.
\item $W_{\alpha,\sigma}$ is non-attainable in $\ell^{2}(\N)$.
\item For $\alpha=\alpha_\sigma^*$, the Hardy weight
\begin{equation}\label{eq:wop_explicit}
W_\sigma^{\mathrm{op}}(n):=W_{\alpha_\sigma^*,\sigma}(n)=4^\sigma
\frac{\Gamma^{2}(\frac{3+2\sigma}{4})}{\Gamma^{2}(\frac{3-2\sigma}{4})}
\frac{\Gamma(n-\frac{1+2\sigma}{4})\Gamma(n+\frac{5-2\sigma}{4})}{\Gamma(n-\frac{1-2\sigma}{4})\Gamma(n+\frac{5+2\sigma}{4})}
\end{equation}
is optimal in $\ell^{2}(\N)$.
\end{enumerate}
\end{theorem}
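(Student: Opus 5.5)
Following the strategy announced in the introduction, the proof starts from the factorisation $(-\lap_{\mathrm{D}})^\sigma=\grad_{\mathrm{D}}^*(-\lap_{\mathrm{N}})^{\sigma-1}\grad_{\mathrm{D}}$ of Lemma~\ref{lem:factorisation}. With $v=\grad_{\mathrm{D}}u$, the quadratic form of $(-\lap_{\mathrm{D}})^\sigma$ becomes the form of $(-\lap_{\mathrm{N}})^{\sigma-1}$ evaluated at $v$. Since $0<\sigma-1<1/2$, the off-diagonal entries of $(-\lap_{\mathrm{N}})^{\sigma-1}$ are strictly negative, so a Frank--Seiringer type ground state identity is available. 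Take a positive ``generalised ground state'' $g=g^{(\alpha)}$, built from Gamma ratios such as $g_n=\Gamma(n+\alpha-1)/\Gamma(n)$ (a discrete analogue of $x^{\alpha-1}$). For its discrete gradient $h=\grad_{\mathrm{D}}g$ one then has
\[
\langle v,(-\lap_{\mathrm{N}})^{\sigma-1}v\rangle=\sum_n \frac{((-\lap_{\mathrm{N}})^{\sigma-1}h)_n}{h_n}|v_n|^2+\frac12\sum_{m\neq n}\bigl|(-\lap_{\mathrm{N}})^{\sigma-1}_{mn}\bigr|\,h_mh_n\Bigl|\frac{v_m}{h_m}-\frac{v_n}{h_n}\Bigr|^2 .
\]
Substituting $v_n=u_n-u_{n-1}$ and combining with a second, local ground state step for the Dirichlet gradient relative to $g$ gives a two-term representation, which is Proposition~\ref{prop:two-term-repre}. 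It has the form
\[
\langle u,(-\lap_{\mathrm{D}})^\sigma u\rangle=\sum_n W_{\alpha,\sigma}(n)|u_n|^2+R_1(u)+R_2(u),
\]
where $W_{\alpha,\sigma}=(-\lap_{\mathrm{D}})^\sigma g/g$ pointwise and $R_1,R_2\ge0$ are the non-local and local remainders. This identity is first proved for finitely supported $u$ and then extended to $\ell^2(\N)$ by density and Fatou. Part~(i) follows immediately.

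The first real computation is to show that $(-\lap_{\mathrm{D}})^\sigma g=W_{\alpha,\sigma}g$ with exactly the Gamma-product in~\eqref{eq:def_W}. This is done using the explicit matrix entries of $(-\lap_{\mathrm{N}})^{\sigma-1}$, which are hypergeometric/Gamma ratios, and summing by a Gauss or Dougall-type ${}_2F_1$ or ${}_3F_2$ evaluation. The resulting sums converge precisely when $\alpha<3/2$. Positivity of $W_{\alpha,\sigma}$ requires $\alpha>\sigma$, and this is where the prefactor $\Gamma(\frac32-\alpha+\sigma)\Gamma(\alpha)/(\Gamma(\alpha-\sigma)\Gamma(\frac32-\alpha))$ arises. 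The same identity for $v=\grad_{\mathrm{D}}u$ makes the remainders manifestly non-negative.

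Part~(iii) comes directly from the representation. If equality holds for some $0\neq u\in\ell^2$, then $R_1(u)=R_2(u)=0$. Since all off-diagonal entries are strictly negative, $v/h$ is constant, which forces $u=cg$. But $g_n\sim n^{\alpha-1}$ with $\alpha>1$ is not in $\ell^2$, a contradiction. For part~(ii), the dichotomy is $\alpha\le\alpha^*_\sigma$ versus $\alpha>\alpha^*_\sigma$, and it hinges on whether $g\notin\ell^2(W)$ or $g\in\ell^2(W)$. Since $W_{\alpha,\sigma}(n)\sim c\,n^{-2\sigma}$, we have $\sum W g^2\sim\sum n^{2\alpha-2-2\sigma}$, which diverges exactly when $\alpha\ge(2\sigma+1)/2$. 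This is not quite the threshold $\alpha_\sigma^*$, so the relevant quantity must instead be the remainder energy of cut-offs $u^{(N)}=g\chi_N$. Criticality is proved via a null-sequence: take cut-offs $u^{(N)}=g\phi_N$ with logarithmic or power cutoffs $\phi_N\to1$, for which $R_1(u^{(N)})+R_2(u^{(N)})\to0$ while $u^{(N)}\to g$ pointwise. If $\widetilde W\ge W$ is another Hardy weight, then $\sum(\widetilde W-W)|u^{(N)}|^2\le R_1+R_2\to0$, which gives $\widetilde W=W$. Estimating the non-local remainder $R_1(g\phi_N)$ requires the decay $|(-\lap_{\mathrm{N}})^{\sigma-1}_{mn}|\sim|m-n|^{-1-2(\sigma-1)}$. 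The remainder tends to $0$ exactly when the symmetric ``energy'' exponent is non-positive. For $\alpha\le\alpha_\sigma^*=(3+2\sigma)/4$, the growth of $h$ is $n^{\alpha-2}$, and the energy scales like $N^{2(\alpha-2)+1+2-2(\sigma-1)}$ up to logarithms; the borderline is where this equals $0$, giving the quoted threshold. For $\alpha>\alpha^*_\sigma$, I exhibit a strictly larger weight. The natural candidate is $W_{\alpha',\sigma}$ with $\alpha'=\frac{3}{2}+\sigma-\alpha$ (the symmetry $\alpha\leftrightarrow\frac32+\sigma-\alpha$ visible in the prefactor), or a convex combination. I show $W_{\alpha',\sigma}\ge W_{\alpha,\sigma}$ with strict inequality somewhere by monotonicity in $\alpha$ on $[\sigma,\alpha^*_\sigma]$ versus $[\alpha^*_\sigma,3/2)$: the prefactor is symmetric and maximal at $\alpha^*$, and the $n$-dependent Gamma ratio must be compared as well.

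For part~(iv), criticality and non-attainability come from (ii) and (iii). Optimality near infinity uses the standard argument that the ground state $g$ satisfies $\sum W^{\mathrm{op}}g^2=\infty$ at $\alpha=\alpha^*$, since $2\alpha^*-2-2\sigma=-1/2-\sigma+\ldots$. Concretely, the exponent is $(3+2\sigma)/2-2-2\sigma=-(1+2\sigma)/2>-1$, so the sum diverges. Taking $u=g\phi_N(1-\chi_M)$, the remainder stays bounded or $o(\sum W u^2)$ while $\sum Wu^2\to\infty$, which yields the ratio $<1+\varepsilon$. The main obstacle is the sharp asymptotic control of the non-local remainder $R_1$ for cut-off ground states, precisely at the borderline exponent $\alpha^*_\sigma$. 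There a logarithmic cutoff $\phi_N(n)=(1-\log n/\log N)_+$ must be used and the double sum estimated carefully.
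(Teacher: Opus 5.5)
Your architecture (the factorisation, the two-term representation of Proposition~\ref{prop:two-term-repre}, a logarithmic cut-off null sequence, non-attainability from a vanishing remainder) is the paper's. However, the one input everything hinges on is wrong.

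\textbf{The parameter sequence.} It must be the Riesz sequence $g=\mathcal{I}_\alpha$ of \eqref{eq:def_I_alp}, with $\mathcal{I}_\alpha(n)=C(\alpha)\,n\Gamma(n+\alpha-1)/\Gamma(n-\alpha+2)\asymp n^{2\alpha-2}$. Its sine-transform representation, regularised by $\lambda+\varepsilon$, gives exactly $(-\lap_{\mathrm{D}})^\sigma\mathcal{I}_\alpha=\mathcal{I}_{\alpha-\sigma}$. The ratio $\mathcal{I}_{\alpha-\sigma}/\mathcal{I}_\alpha$ is precisely \eqref{eq:def_W}, prefactor $C(\alpha-\sigma)/C(\alpha)$ included.

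Your $g_n=\Gamma(n+\alpha-1)/\Gamma(n)\asymp n^{\alpha-1}$ is not a multiple of $\mathcal{I}_\alpha$, and it produces a different weight. Already at $\sigma=1$, where everything is explicit, it gives $(\alpha-1)(2-\alpha)/(n(n+\alpha-2))$. By contrast, \eqref{eq:def_W} gives $2(\alpha-1)(3-2\alpha)/((n+\alpha-2)(n-\alpha+2))$. With $\alpha-1=\sigma-\tfrac12$, your sequence is exactly the $g^{(\sigma)}$ the paper uses to obtain the \emph{different} weight $\widehat W_\sigma^{\mathrm{op}}$.

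\textbf{Non-negativity of $R_2$.} This is not manifest. Its coefficients are $V_{n+1}g_{n+1}g_n$ with $V=(-\lap_{\mathrm{N}})^{\sigma-1}h/h$, and you never show $V>0$. The paper gets this from Lemma~\ref{lem:prop_I_alp}(iii),(v): $(-\lap_{\mathrm{N}})^{\sigma-1}\grad_{\mathrm{D}}\mathcal{I}_\alpha=\grad_{\mathrm{D}}\mathcal{I}_{\alpha-\sigma+1}>0$, since $1<\alpha-\sigma+1<3/2$. It also needs Lemma~\ref{lem:factorisation_extended} to justify $\grad_{\mathrm{D}}^*V\grad_{\mathrm{D}}g=(-\lap_{\mathrm{D}})^\sigma g$ pointwise for $g\notin\ell^2(\N)$.

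\textbf{Consequences for the exponents.} The wrong growth rate is why your bookkeeping never produces $\alpha_\sigma^*$. Your borderline $2(\alpha-2)+1+2-2(\sigma-1)=0$ gives $\alpha=\sigma-\tfrac12$. In (iv), the exponent $-(1+2\sigma)/2$ is $<-1$, not $>-1$, for $\sigma>\tfrac12$. So with your $g$ the sum $\sum Wg^2$ converges, and the optimality-near-infinity argument collapses.

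With $g=\mathcal{I}_\alpha$ one has $h\asymp n^{2\alpha-3}$ and $a_n:=V_{n+1}g_{n+1}g_n\asymp n^q$ with $q=4\alpha-2\sigma-2$, and also $W_{\alpha,\sigma}g^2\asymp n^{q-2}$. Take $u^N=g\,\chi(\log n/\log N)$ and $\eta_N=\grad_{\mathrm{D}}u^N/h$. Using $|(-\lap_{\mathrm{N}})^{\sigma-1}_{m,n}|\lesssim|m-n|^{1-2\sigma}$ and $|\eta_N(m)-\eta_N(n)|\lesssim\log(m/n)/\log N$, both remainders are $\lesssim(\log N)^{-2}\sum_{n\le N^2+1}n^{q-2}$. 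This is $O(1/\log N)$ exactly when $q\le1$, i.e.\ $\alpha\le\alpha_\sigma^*$. At $q=1$, a cut-off equal to $1$ on $[N^2,N^3]$ gives $\sum_n W|u^N_n|^2\gtrsim\log N$. Note also that $g\in\ell^2(W)$ holds iff $\alpha<\alpha_\sigma^*$. That is the positive/null-criticality distinction, not the criticality dichotomy you first proposed.

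\textbf{Your non-criticality argument is sound.} The comparison via $\alpha'=\tfrac32+\sigma-\alpha$ is easy to finish. The prefactor is invariant under $\alpha\mapsto\alpha'$. The $\alpha$-derivative of the logarithm of the $n$-dependent ratio in \eqref{eq:def_W} equals $D(n-\alpha+2)-D(n+\alpha-1-\sigma)$, where $D(y):=\psi(y+\sigma)-\psi(y)$. This is $<0$ because $D$ is decreasing ($\psi$ is concave) and $n-\alpha+2>n+\alpha-1-\sigma$. Hence $W_{\alpha',\sigma}>W_{\alpha,\sigma}$ pointwise for $\alpha>\alpha_\sigma^*$, and $W_{\alpha',\sigma}$ is a Hardy weight by (i). This is a valid alternative to the paper's route. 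The paper instead uses the rank-one improvement $|u_1|^2/(Ag_1^2)$, obtained from $A=\sum_n1/a_n<\infty$ for $q>1$ and Cauchy--Schwarz applied to $R_{\loc}$.
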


\begin{remark}
Theorem~\ref{thm:main} complements
\cite[Theorem~1.1]{das-fer_26}, which treats
$0<\sigma\leq1$ using a different notion of optimality;
see the next remark. Since their notion implies optimality
in the sense of Definition~\ref{def:optimality}, the two
results together show that $W_\sigma^{\mathrm{op}}$ is optimal
in $\ell^2(\N)$ for every $0<\sigma<3/2$, covering the full
subcriticality range of $(-\lap_{\mathrm{D}})^\sigma$. 
After the publication of~\cite{das-fer_26}, the authors added to the revised arXiv version~\cite{das-fer_26_arxiv_v2} the inequality 
\begin{equation}
    W_\sigma^{\mathrm{op}}(n) \ge \frac{\gamma_\sigma}{n^{2\sigma}},
    \quad\text{ with }\; \gamma_\sigma:= 4^\sigma\frac{\Gamma^{2}(\frac{3+2\sigma}{4})}{\Gamma^{2}(\frac{3-2\sigma}{4})};
\label{eq:W-lower-bound}
\end{equation}
see \cite[Lemma~5.1]{das-fer_26_arxiv_v2}, where it is stated for $0<\sigma<1$. In fact, the inequality holds throughout $0<\sigma<3/2$; we provide an independent proof in
Appendix~\ref{app:weight_lower_bound}. Consequently,
\begin{equation}
(-\lap_\mathrm{D})^\sigma\ge \frac{\gamma_\sigma}{n^{2\sigma}}
\label{eq:ineq-clas-hardy-type}
\end{equation}
on $\ell^{2}(\N)$ for all $\sigma\in(0,3/2)$. Moreover, by the asymptotic behaviour $W_\sigma^{\mathrm{op}}(n) \sim \gamma_\sigma n^{-2\sigma}$ for $n\to\infty$, which stems from expansion~\eqref{eq:Gamma_ratio_asymp} below, and optimality of $W_\sigma^{\mathrm{op}}$ near infinity, the constant $\gamma_\sigma$ in \eqref{eq:ineq-clas-hardy-type} is sharp for all $\sigma\in(0,3/2)$.
\end{remark}

\begin{remark}
The notion of optimality used in \cite{das-fer_26} requires
criticality and null-criticality, and implies optimality
near infinity. For a critical Hardy weight $W$,
null-criticality means that the positive Agmon ground state
$g$ of $(-\lap_{\mathrm{D}})^\sigma-W$ does not belong to the weighted space $\ell^{2}(\N,W)$; see \cite[Definition~2.9]{das-fer_26}.
In the graph setting, this implies non-attainability in
$\ell^2(\N)$: equality cases are multiples of $g$, and
$\ell^2(\N)\subseteq\ell^2(\N,W)$, since every Hardy
weight for the bounded operator $(-\lap_{\mathrm{D}})^\sigma$
is bounded. Thus null-criticality excludes the ground state
from a larger space than $\ell^2(\N)$. However, this formulation relies on results from
the graph Schr{\"o}dinger operator theory~\cite{kel-pin-pog_20}, which does not
apply directly when $\sigma>1$, as mentioned above. We therefore formulate non-attainability
directly in $\ell^2(\N)$ and establish optimality near
infinity separately.
\end{remark}

\begin{remark}
Without going into details, we remark that the notion of non-attainability can be strengthened.
Namely, one may consider the homogeneous energy space $\mathcal{H}_\sigma$, defined as the completion
of $\ell^2(\N)$ in the norm
\[
    \|u\|_\sigma:=\|(-\lap_{\mathrm{D}})^{\sigma/2}u\|_{\ell^2(\N)}.
\]
For the weights in Theorem~\ref{thm:main}, the fractional Hardy
inequality yields a continuous embedding $\mathcal{H}_\sigma\hookrightarrow \ell^2(\N,W_{\alpha,\sigma})$ and, for $\alpha\geq\alpha_\sigma^*$, non-attainability strengthens to the strict inequality
\[
    \|u\|_\sigma^2
    >
    \sum_{n=1}^\infty W_{\alpha,\sigma}(n)|u_n|^2
\]
for all $0\neq u\in\mathcal{H}_\sigma$. Nevertheless, we retain the $\ell^2(\N)$ formulation in our main
results.
\end{remark}

The weights in~\eqref{eq:def_W}, including the optimal weight~\eqref{eq:wop_explicit}, are constructed using a Riesz sequence associated with the fractional Dirichlet Laplacian (see~\eqref{eq:def_I_alp}). Alternatively, replacing the Riesz sequence with a parameter sequence (see \eqref{eq:def_g_sigma}) motivated by our previous work~\cite{sta-wac_26} yields a new optimal Hardy weight distinct from~\eqref{eq:wop_explicit} for all exponents $\sigma \in (0, 3/2)$, except when $\sigma=1/2$.

\begin{theorem}
    \label{thm:new_opt_weight}
    Let $0<\sigma<3/2$. Then the sequence
    \begin{equation}
        \label{eq:new_opt_weight}
        \widehat{W}_{\sigma}^{\mathrm{op}}(n)
          := \frac{\Gamma^2(\sigma+1/2)}{\pi} \left(4^\sigma\frac{\Gamma(2n-1)}{\Gamma(2n-1+2\sigma)}+\sin(\pi\sigma)\frac{\Gamma(n)\Gamma(n-\sigma+1)}{\Gamma(n+3/2)\Gamma(n+\sigma-1/2)}\right)
    \end{equation}
    is an optimal Hardy weight for $(-\lap_\mathrm{D})^\sigma$ in $\ell^2(\N)$.
\end{theorem}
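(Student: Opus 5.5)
The plan is to follow the same ground-state scheme that, according to the introduction, underlies Theorem~\ref{thm:main}, but with the Riesz sequence replaced by the parameter sequence $g$ from~\eqref{eq:def_g_sigma}. That sequence is modelled on the ground state of the Toeplitz operator $(-\lap)^\sigma$ in~\cite{sta-wac_26}, namely $g_n$ proportional to $\Gamma(n+\sigma-1/2)/\Gamma(n)$ or something close to it.

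The argument rests on two structural facts. First, $(-\lap_{\mathrm{D}})^\sigma$ differs from the Toeplitz operator $(-\lap)^\sigma=P_+^*(-\lap_\Z)^\sigma P_+$ by a Hankel operator; by~\cite[Remark~2.3]{sta-wac_26} its kernel is $h_{m+n}$ with $h_k$ the Fourier coefficients of $(2-2\cos\theta)^\sigma$. Second, this leads to the formula
\[
\big((-\lap_{\mathrm{D}})^\sigma g\big)_n=\widehat W_\sigma^{\mathrm{op}}(n)\,g_n .
\]
Its first term comes from the Toeplitz part, exactly as in~\eqref{eq:sta-wac_opt_weight}. The $\sin(\pi\sigma)$ term comes from the Hankel correction, which contributes explicit $_2F_1$/Gamma sums via Gauss' summation.

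So the first step is to verify this identity by computing $\sum_m h_{|n-m|}g_m$ and $\sum_m h_{n+m}g_m$ in closed form. For $\sigma>1/2$ the sums converge only conditionally or need regularisation, and there I would argue by analytic continuation in $\sigma$.

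The second step is a ground state representation, split according to $\sigma$.
\begin{itemize}
\item For $0<\sigma\le1$ the off-diagonal entries of $(-\lap_{\mathrm{D}})^\sigma$ are non-positive. The standard identity then gives
\[
\langle u,((-\lap_{\mathrm{D}})^\sigma-\widehat W)u\rangle=\tfrac12\sum_{m\ne n}\big(-[(-\lap_{\mathrm{D}})^\sigma]_{mn}\big)g_mg_n\,\big|u_m/g_m-u_n/g_n\big|^2\ge0 .
\]
\item For $1<\sigma<3/2$ I would use Lemma~\ref{lem:factorisation}. Put $v=\grad_{\mathrm{D}}u$ and apply the analogous identity to $(-\lap_{\mathrm{N}})^{\sigma-1}$, whose off-diagonal entries are negative, with ground state $\grad_{\mathrm{D}}g$. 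This is the two-term representation of Proposition~\ref{prop:two-term-repre}.
\end{itemize}
The required positivity of $g$, respectively of $\grad_{\mathrm{D}}g$, should follow from the Gamma-ratio form of $g$ and the condition $\sigma<3/2$.

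Both non-negative remainders vanish only if $u$ is a multiple of $g$. Since $g\notin\ell^2(\N)$ for $\sigma\ge 0$ (it grows like $n^{\sigma-1/2}$), this gives the Hardy inequality and non-attainability.

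Next come criticality and optimality near infinity. I would take truncations $u^{(N)}=g\,\phi_N$, with cutoffs $\phi_N$ that equal $1$ on $[1,N]$ and decay logarithmically, as in~\cite{sta-wac_26}. The aim is to show that the remainder terms stay bounded while
\[
\sum_n \widehat W\,|u^{(N)}_n|^2\to\infty .
\]
Because $\widehat W(n)g_n^2\sim c/n$, this divergence is logarithmic, and the choice of $\sigma$-dependent exponents makes it exactly borderline.
\begin{itemize}
\item \emph{Criticality.} Suppose $\widetilde W\ge \widehat W$ is another Hardy weight. Testing with $u^{(N)}$ gives $\sum(\widetilde W-\widehat W)|u^{(N)}|^2\le C$. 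Letting $N\to\infty$, this forces $\widetilde W=\widehat W$, because $g_n>0$ everywhere.
\item \emph{Optimality near infinity.} Multiply $u^{(N)}$ additionally by a cutoff vanishing below $M$. Since the weighted mass diverges while the remainder stays bounded, the ratio of the two sides tends to $1$.
\end{itemize}

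I expect the main obstacle to be showing that the non-local remainder of the cutoff functions stays bounded, especially for $1<\sigma<3/2$. There the remainder involves $(-\lap_{\mathrm{N}})^{\sigma-1}$ applied to $\grad_{\mathrm{D}}(g\phi_N)$, with kernel decaying like $|m-n|^{-2\sigma+1}$. To handle it I would use the asymptotics~\eqref{eq:Gamma_ratio_asymp} for the kernel entries and for $g$, and split the double sum into the near-diagonal region $|m-n|<n/2$ and the far region. The near-diagonal part should be controlled by a Taylor bound on $\phi_N$, and the far part by direct summation.
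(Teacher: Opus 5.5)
Your architecture matches the paper's: the same $g=g^{(\sigma)}$, Lemma~\ref{lem:frac_lap_N_id} for $\sigma\le1$, Proposition~\ref{prop:two-term-repre} for $1<\sigma<3/2$, and logarithmic cut-offs. However, two steps do not go through as written.

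\emph{Non-negativity of the local remainder.} For $1<\sigma<3/2$, positivity of $g$ and $\grad_{\mathrm{D}}g$ only makes $R_{\mathrm{N}}$ non-negative. The local remainder $R_{\loc}$ has coefficients $V_{n+1}g_{n+1}g_n$ with $V=(-\lap_{\mathrm{N}})^{\sigma-1}\grad_{\mathrm{D}}g/\grad_{\mathrm{D}}g$. Since $(-\lap_{\mathrm{N}})^{\sigma-1}$ has negative off-diagonal entries, nothing forces $V\ge0$, and your plan never checks it. The missing argument runs as follows.
\begin{enumerate}
\item The recurrence $\grad_{\mathrm{D}}g^{(\sigma)}=(\sigma-\tfrac12)g^{(\sigma-1)}$ turns $V$ into $(-\lap_{\mathrm{N}})^{s}g^{(s)}/g^{(s)}$ with $s=\sigma-1$.
\item A second Gauss summation, for the Neumann Hankel part $c_{m+n-1}$, gives the closed form \eqref{eq:lapN_g_sigma}.
\item Positivity of that expression is the inequality $\Gamma(n+s)\Gamma(n-s)<\Gamma(n+\tfrac12)\Gamma(n-\tfrac12)$, from log-convexity of $\Gamma$, which requires $s<1/2$.
\end{enumerate}
This is where $\sigma<3/2$ genuinely enters, not through positivity of $g$ or $\grad_{\mathrm{D}}g$; for $s$ near $1$ one actually gets $(-\lap_{\mathrm{N}})^{s}g^{(s)}_1<0$.

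You also never verify $\widehat W_\sigma^{\mathrm{op}}\ge0$, which is required of a Hardy weight. This is not evident for $1<\sigma<3/2$, where $\sin(\pi\sigma)<0$; it needs $\Gamma(n+\sigma)\Gamma(n+1-\sigma)<\Gamma(n+\tfrac32)\Gamma(n-\tfrac12)$. Incidentally, no analytic continuation is needed in your first step. For every $\sigma>0$ the Toeplitz and Hankel series have terms $\mathcal{O}(n^{-\sigma-3/2})$ and converge absolutely, so Gauss's theorem applies directly.

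\emph{Criticality.} From $Q_\sigma[u^{(N)}]\le C$ and $u^{(N)}\to g$ pointwise you only obtain $\sum_n(\widetilde W_n-\widehat W_n)g_n^2\le C$. That does not exclude, say, $\widetilde W_n-\widehat W_n=\varepsilon\,\delta_{n,1}$. To force $\widetilde W=\widehat W$ you need a null sequence, $Q_\sigma[u^{(N)}]\to0$. Your logarithmic cut-offs do provide this, but it has to be shown.
\begin{itemize}
\item With $\xi_N(n)=\chi(\log n/\log N)$ one has $|\xi_N(n+1)-\xi_N(n)|\le C/(n\log N)$. The coefficients $V_{n+1}g_{n+1}g_n$ are $\asymp n$ (the borderline exponent $q=1$). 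Hence $R_{\loc}[u^N]\le C(\log N)^{-2}\sum_{n\le N^2}n^{-1}\le C/\log N$.
\item The non-local remainder obeys the same bound. Use $|\eta_N(m)-\eta_N(n)|\le C\log(m/n)/\log N$ for $\eta_N=\grad_{\mathrm{D}}u^N/\grad_{\mathrm{D}}g$, together with the kernel bound $C|m-n|^{-1-2s}$; the sum then reduces to a convergent integral.
\item The case $\sigma\le1$ is handled analogously.
\end{itemize}
Mere boundedness of the remainders does suffice for optimality near infinity, where it is compared with a weighted mass of order $\log N$. It does not, by itself, give criticality.
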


\begin{remark}
    The optimal weight satisfies $\widehat{W}_{\sigma}^{\mathrm{op}}(n)>0$ for all $n\in\N$ and $\sigma\in(0,3/2)$.
\end{remark}

\begin{remark}
     For $\sigma=1$, $(-\lap_{\rm D})^{\sigma}$ coincides with the Toeplitz Laplacian $-\lap$ from \cite{sta-wac_26} and the optimal weight \eqref{eq:new_opt_weight} coincides with \eqref{eq:sta-wac_opt_weight}. Moreover, for the square root $(-\lap_{\rm D})^{1/2}$, the two optimal weights coincide:
    \[
    W_{1/2}^{\mathrm{op}}(n)=\widehat W_{1/2}^{\mathrm{op}}(n)=\frac{2n}{\pi(n^2-1/4)}
    \]
    for all $n\in\N$.
\end{remark}

\subsection{Organisation}

Section~\ref{sec:pre} contains the necessary preliminaries.
In particular, Proposition~\ref{prop:two-term-repre} provides
a two-term ground state representation, which may be of
independent interest. We use this representation in
Sections~\ref{sec:proofs} and~\ref{sec:new_weight} to prove Theorems~\ref{thm:main} and~\ref{thm:new_opt_weight}, respectively. Appendix~\ref{app:weight_lower_bound} proves the lower bound \eqref{eq:W-lower-bound}.

\section{Preliminaries}\label{sec:pre}

\subsection{Notation}
We use the notation $\Z$ and $\N$ for the sets of integers and positive integers, respectively. We denote by $C(\N)$ the space of complex sequences indexed by $\N$ and by $\ell^{2}(\N)$ and $C_{0}(\N)$ its subspaces of square-summable and finitely supported sequences, respectively.

\subsection{Discrete Dirichlet and Neumann Laplacians}\label{subsec:laps}

On $\ell^{2}(\N)$, we define the \emph{discrete Dirichlet} and \emph{Neumann Laplacians} by
\[
  -\lap_{\mathrm{D}}:=\grad_{\mathrm{D}}^{*}\grad_{\mathrm{D}}
  \quad\mbox{ and }\quad
  -\lap_{\mathrm{N}}:=\grad_{\mathrm{D}}\grad_{\mathrm{D}}^{*},
\]
where the \emph{discrete Dirichlet gradient} $\grad_{\mathrm{D}}$ is determined by
\[
 \grad_{\mathrm{D}}u_n :=\begin{cases}
                    u_1 & \mbox{ if } n=1, \\
                    u_n-u_{n-1} & \mbox{ if } n\geq2,
                    \end{cases}
\]
for $u\in\ell^{2}(\N)$, and $\grad_{\mathrm{D}}^{*}$ is its adjoint acting by $\grad_{\mathrm{D}}^{*}u_n:=u_{n}-u_{n+1}$ for all $n\in\N$. Clearly, by their actions, the discrete difference operators $\grad_{\mathrm{D}}$ and $\grad_{\mathrm{D}}^{*}$ are well defined on the space of all complex sequences $C(\N)$, not necessarily from $\ell^{2}(\N)$. Similarly, the actions of $-\lap_{\mathrm{D}}$ and $-\lap_{\mathrm{N}}$, which read
\[
 (-\lap_{\mathrm{D}}u)_n=
 \begin{cases}
   2u_1-u_2, & n=1,\\
   -u_{n-1}+2u_n-u_{n+1}, & n\geq2,
 \end{cases}
\]
and
\[
 (-\lap_{\mathrm{N}}u)_n=
 \begin{cases}
   u_1-u_2, & n=1,\\
   -u_{n-1}+2u_n-u_{n+1}, & n\geq2,
 \end{cases}
\]
make sense for all $u\in C(\N)$, and in this sense, their definitions are understood whenever they apply to sequences outside of $\ell^{2}(\N)$.

As operators on $\ell^{2}(\N)$, $-\lap_{\mathrm{D}}$ and $-\lap_{\mathrm{N}}$ are self-adjoint, both with purely absolutely continuous spectrum filling the interval $[0,4]$. In addition, a direct calculation shows that
\begin{equation}
 S(-\lap_{\mathrm{D}})S^{*}=C(-\lap_{\mathrm{N}})C^{*}=M_{\lambda},
\label{eq:laps_SC_M}
\end{equation}
where $M_{\lambda}$ is the multiplication operator by the function
\[
\lambda(\theta):=4\sin^2(\theta/2)
\]
on $L^2(0,\pi)$, and $S,C:\ell^2(\N)\to L^2(0,\pi)$ are the unitary sine and cosine transforms determined by
\[
Su(\theta):=\sqrt{\frac2\pi}\sum_{n=1}^{\infty}u_{n}\sin(n\theta)
\quad\mbox{ and }\quad
Cu(\theta):=\sqrt{\frac2\pi}\sum_{n=1}^{\infty}u_{n}\cos\bigl((n-\tfrac12)\theta\bigr)
\]
for $u\in\ell^2(\N)$, with convergence in $L^2(0,\pi)$. Consequently, the fractional discrete Laplacians are defined by the functional calculus as
\begin{equation*}
(-\lap_{\mathrm{D}})^{\sigma}:=S^{*}M_{\lambda^\sigma}S
\quad\mbox{ and }\quad
(-\lap_{\mathrm{N}})^{\sigma}:=C^{*}M_{\lambda^\sigma}C
\end{equation*}
for any exponent $\sigma>0$. These two fractional Laplacians are related, a property we shall use below.

\begin{lemma}\label{lem:factorisation}
    On $\ell^{2}(\N)$, we have, for all $\sigma>0$, the factorisation formula
    \begin{equation}
        (-\lap_{\mathrm{D}})^{\sigma+1}=\grad_{\mathrm{D}}^{*}\,(-\lap_{\mathrm{N}})^{\sigma}\,\grad_{\mathrm{D}}
        \label{eq:factorisation}
    \end{equation}
    and the intertwining identity
    \begin{equation}
    (-\lap_{\mathrm{N}})^\sigma\,\grad_{\mathrm{D}}
    =\grad_{\mathrm{D}}\,(-\lap_{\mathrm{D}})^\sigma.
    \label{eq:intertwining}
\end{equation}
\end{lemma}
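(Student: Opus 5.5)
The plan is to conjugate everything by the sine and cosine transforms, so that both identities reduce to a statement about how $\grad_{\mathrm{D}}$ looks between the two diagonalising bases. Concretely, we compute the operator $C\grad_{\mathrm{D}}S^{*}$ on $L^2(0,\pi)$ and show that it is a multiplication operator $M_\mu$ by a function $\mu$ with $|\mu|^2=\lambda$. For $u\in\ell^2(\N)$ we have
\[
\grad_{\mathrm{D}}u_n=u_n-u_{n-1},\qquad u_0:=0,
\]
and we test this against the cosine basis:
\[
\sum_{n\ge1}(u_n-u_{n-1})\cos\bigl((n-\tfrac12)\theta\bigr)=\sum_{n\ge1}u_n\bigl[\cos\bigl((n-\tfrac12)\theta\bigr)-\cos\bigl((n+\tfrac12)\theta\bigr)\bigr]=2\sin(\theta/2)\sum_{n\ge1}u_n\sin(n\theta).
\]
This first holds for finitely supported $u$, and then for all $u\in\ell^2(\N)$ by continuity, since $\grad_{\mathrm{D}}$, $C$ and $S$ are bounded. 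Hence
\[
C\grad_{\mathrm{D}}=M_{\mu}S,\qquad \mu(\theta):=2\sin(\theta/2),
\]
with $\mu$ real and $\mu^2=\lambda$. Taking adjoints, and using unitarity of $S$ and $C$, gives $\grad_{\mathrm{D}}^{*}=S^{*}M_\mu C$.

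With this in hand both identities are one-line computations in the functional calculus. For the intertwining identity~\eqref{eq:intertwining},
\[
(-\lap_{\mathrm{N}})^\sigma\grad_{\mathrm{D}}=C^*M_{\lambda^\sigma}CC^*M_\mu S=C^*M_{\lambda^\sigma\mu}S=C^*M_\mu SS^*M_{\lambda^\sigma}S=\grad_{\mathrm{D}}(-\lap_{\mathrm{D}})^\sigma .
\]
For the factorisation~\eqref{eq:factorisation},
\[
\grad_{\mathrm{D}}^*(-\lap_{\mathrm{N}})^\sigma\grad_{\mathrm{D}}=S^*M_\mu M_{\lambda^\sigma}M_\mu S=S^*M_{\lambda^{\sigma+1}}S=(-\lap_{\mathrm{D}})^{\sigma+1}.
\]
Alternatively, \eqref{eq:factorisation} follows from \eqref{eq:intertwining} by left-multiplying with $\grad_{\mathrm{D}}^*$ and using $\grad_{\mathrm{D}}^*\grad_{\mathrm{D}}=-\lap_{\mathrm{D}}$, which commutes with $(-\lap_{\mathrm{D}})^\sigma$.

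All operators involved are bounded, so there are no domain issues. The only point requiring care is the trigonometric telescoping step, in particular the boundary term at $n=1$, which is absorbed by the convention $u_0=0$ matching the Dirichlet gradient. This is also consistent with the relation $S(-\lap_{\mathrm{D}})S^*=C(-\lap_{\mathrm{N}})C^*=M_\lambda$ in~\eqref{eq:laps_SC_M}: indeed $M_\mu^2=M_\lambda$ reproduces $-\lap_{\mathrm{D}}=\grad_{\mathrm{D}}^*\grad_{\mathrm{D}}$ and $-\lap_{\mathrm{N}}=\grad_{\mathrm{D}}\grad_{\mathrm{D}}^*$. I do not expect a genuine obstacle.
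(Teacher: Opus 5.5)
Your proposal is correct and follows the paper's proof essentially verbatim: the paper also establishes $C\grad_{\mathrm{D}}=M_{\sqrt{\lambda}}S$ via the same trigonometric identity, takes adjoints, and reads off both identities from the functional calculus. Your handling of the $n=1$ boundary term via $u_0=0$ and the density extension from finitely supported sequences are exactly the details the paper leaves as ``readily verified.''
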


\begin{proof}
    First, using the definitions together with the elementary identity
    \[
    \cos\bigl((n-\tfrac12)\theta\bigr)-\cos\bigl((n+\tfrac12)\theta\bigr)=2\sin(\theta/2)\sin(n\theta),
    \]
    one readily verifies that $C\,\grad_{\mathrm{D}}=M_{\sqrt{\lambda}}S$, and also
    $\grad_{\mathrm{D}}^{*}C^{*}=S^{*}M_{\sqrt{\lambda}}$ by taking adjoints.
    Using the last two identities, we find
    \[
    (-\lap_{\mathrm{D}})^{\sigma+1}=S^{*}M_{\lambda^{\sigma+1}}S=S^{*}M_{\sqrt{\lambda}}M_{\lambda^{\sigma}}M_{\sqrt{\lambda}}S
    =\grad_{\mathrm{D}}^{*}\,C^{*}M_{\lambda^{\sigma}}C\,\grad_{\mathrm{D}}=\grad_{\mathrm{D}}^{*}\,(-\lap_{\mathrm{N}})^{\sigma}\,\grad_{\mathrm{D}},
    \]
    which is \eqref{eq:factorisation}. Similarly,
    \[
    (-\lap_{\mathrm{N}})^\sigma\grad_{\mathrm{D}}=C^*M_{\lambda^{\sigma+1/2}}S=\grad_{\mathrm{D}}(-\lap_{\mathrm{D}})^\sigma,
    \]
    which proves~\eqref{eq:intertwining}.
\end{proof}

The matrix representation of $(-\lap_{\mathrm{D}})^{\sigma}$ with respect to the standard basis of $\ell^{2}(\N)$ was computed in \cite[Proposition~2.1]{ger-kre-sta_25}; the matrix entries read
\begin{equation}
    (-\lap_{\mathrm{D}})^{\sigma}_{m,n}=\frac{(-1)^{m-n}\Gamma(2\sigma+1)}{\Gamma(\sigma+1+m-n)\Gamma(\sigma+1-m+n)}+\frac{(-1)^{m+n+1}\Gamma(2\sigma+1)}{\Gamma(\sigma+1+m+n)\Gamma(\sigma+1-m-n)}
    \label{eq:lap-dir_entries}
\end{equation}
for all $m,n\in\N$ and $\sigma>0$. A completely analogous computation, applying the Fourier integral
\[
\frac{1}{\pi}\int_{0}^{\pi}\lambda^{\sigma}(\theta)\cos(k\theta)\dd\theta = \frac{(-1)^{k}\Gamma(2\sigma+1)}{\Gamma(\sigma+k+1)\Gamma(\sigma-k+1)}, \quad k\in\Z,
\]
which can be deduced from the beta integral identity~\cite[Eq.~3.631(9)]{grad-ryz_94}, in the expression
\begin{align*}
    (-\lap_{\mathrm{N}})^{\sigma}_{m,n}&=\frac{2}{\pi}\int_{0}^{\pi}\lambda^{\sigma}(\theta)\cos((m-1/2)\theta)\cos((n-1/2)\theta)\dd\theta \\
    &=\frac{1}{\pi}\int_{0}^{\pi}\lambda^{\sigma}(\theta)\cos((m-n)\theta)\dd\theta+\frac{1}{\pi}\int_{0}^{\pi}\lambda^{\sigma}(\theta)\cos((m+n-1)\theta)\dd\theta
\end{align*}
gives the Neumann Laplacian matrix entries
\begin{equation}
(-\lap_{\mathrm{N}})^{\sigma}_{m,n}=\frac{(-1)^{m-n}\Gamma(2\sigma+1)}{\Gamma(\sigma+1+m-n)\Gamma(\sigma+1-m+n)}+\frac{(-1)^{m+n+1}\Gamma(2\sigma+1)}{\Gamma(\sigma+m+n)\Gamma(\sigma+2-m-n)}
    \label{eq:lap-neu_entries}
\end{equation}
for all $m,n\in\N$ and $\sigma>0$. Since the reciprocal Gamma function is an entire function vanishing at the non-positive integers, the matrices of $(-\lap_{\mathrm{D}})^{\sigma}$ and $(-\lap_{\mathrm{N}})^{\sigma}$ are banded for $\sigma\in\N$.

It is important to notice that both terms in~\eqref{eq:lap-neu_entries} are strictly negative for all $0<\sigma<1$ and $m\neq n$. Indeed, the Euler reflection formula \cite[Eq.~(5.5.3)]{dlmf}
\begin{equation}\label{eq:euler_reflection}
        \Gamma(z)\Gamma(1-z)=\frac{\pi}{\sin(\pi z)}
\end{equation}
gives
\[
\frac{(-1)^k\Gamma(2\sigma+1)}
{\Gamma(\sigma+k+1)\Gamma(\sigma-k+1)}
=
-\frac{\Gamma(2\sigma+1)\sin(\pi \sigma)}{\pi}
\frac{\Gamma(k-\sigma)}{\Gamma(k+\sigma+1)}
<0
\]
for every $k\in\N$. The two terms in~\eqref{eq:lap-neu_entries} correspond to $k=|m-n|\in\N$ and $k=m+n-1\in\N$, respectively. Consequently,
\begin{equation}
(-\lap_{\mathrm{N}})^\sigma_{m,n}<0
\quad\mbox{ for all }m\neq n\text{ and }0<\sigma<1.
\label{eq:lap_N_neg}
\end{equation}

Both fractional Laplacians extend to larger sequence spaces by their matrix representations. We denote their domains of absolute convergence by
\[
\mathcal{D}_{\mathrm{X}}^{(\sigma)}:=\Bigl\{ u\in C(\N) \Bigm\vert \sum_{n=1}^\infty \bigl|(-\lap_{\mathrm{X}})^{\sigma}_{m,n} u_n\bigr| < \infty,\;\forall m\in\N \Bigr\} \quad\mbox{ for } \mathrm{X}\in\{\mathrm{D},\mathrm{N}\}.
\]
Using~\eqref{eq:euler_reflection} in conjunction with the Gamma ratio asymptotics~\cite[Eq.~(5.11.13)]{dlmf}
\begin{equation}
    \label{eq:Gamma_ratio_asymp}
    \frac{\Gamma(x+a)}{\Gamma(x+b)}=x^{a-b}\left[1+\frac{(a-b)(a+b-1)}{2x}+\mathcal{O}\!\left(\frac{1}{x^{2}}\right)\!\right]
    \quad\text{ as } x\to\infty,
\end{equation}
in \eqref{eq:lap-dir_entries} and \eqref{eq:lap-neu_entries}, one infers, as $n\to\infty$, that
\begin{equation}
(-\lap_{\mathrm D})^\sigma_{m,n}=-\frac{2\Gamma(2\sigma+2)\sin(\pi\sigma)}{\pi}
\frac{m}{n^{2\sigma+2}}\left[1+\mathcal{O}\!\left(\frac{1}{n}\right)\right]
\label{eq:asympt_lap_D}
\end{equation}
and 
\begin{equation}
(-\lap_{\mathrm N})^\sigma_{m,n}=
-\frac{2\Gamma(2\sigma+1)\sin(\pi\sigma)}{\pi}
\frac{1}{n^{2\sigma+1}}\left[1+\mathcal{O}\!\left(\frac{1}{n}\right)\right]
\label{eq:asympt_lap_N}
\end{equation}
for fixed $m\in\mathbb N$ and non-integer $\sigma>0$.

Next we show that the factorisation~\eqref{eq:factorisation} and the intertwining identity \eqref{eq:intertwining}, although stated on $\ell^2(\N)$, remain valid pointwise when both sides are applied to any sequence $g$ with $\grad_{\rm D}g\in\mathcal{D}_{\rm N}^{(\sigma)}$.

\begin{lemma}\label{lem:factorisation_extended}
    Let $\sigma>0$ and $g\in C(\N)$ be such that
    $\grad_{\rm D}g\in\mathcal{D}_{\rm N}^{(\sigma)}$.
    Then $g\in\mathcal{D}_{\rm D}^{(\sigma)}\cap
    \mathcal{D}_{\rm D}^{(\sigma+1)}$ and the intertwining identity
    \begin{equation}\label{eq:intertwining_extended}
        (-\lap_{\rm N})^\sigma\grad_{\rm D}g
        =\grad_{\rm D}(-\lap_{\rm D})^\sigma g
    \end{equation}
    as well as the factorisation formula
    \begin{equation}\label{eq:factorisation_extended}
        (-\lap_{\rm D})^{\sigma+1}g
        =\grad_{\rm D}^*(-\lap_{\rm N})^\sigma\grad_{\rm D}g
    \end{equation}
    hold pointwise.
\end{lemma}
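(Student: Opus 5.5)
The plan is to work at the level of matrix entries: first establish entrywise versions of \eqref{eq:intertwining} and \eqref{eq:factorisation}, and then sum them against $g$ using summation by parts. Write $D^{(\sigma)}=(-\lap_{\rm D})^\sigma$ and $N^{(\sigma)}=(-\lap_{\rm N})^\sigma$ for the matrices \eqref{eq:lap-dir_entries}, \eqref{eq:lap-neu_entries}.

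\emph{Step 1: entrywise identities.} Apply \eqref{eq:intertwining} to the basis vectors $e_n\in\ell^2(\N)$. Since $\grad_{\rm D}e_n=e_n-e_{n+1}$, this gives
\[
N^{(\sigma)}_{m,n}-N^{(\sigma)}_{m,n+1}=D^{(\sigma)}_{m,n}-D^{(\sigma)}_{m-1,n},
\qquad D^{(\sigma)}_{0,n}:=0 .
\]
Likewise \eqref{eq:factorisation} gives $D^{(\sigma+1)}_{m,n}=\sum_{k}(\grad_{\rm D}^*)_{m,k}\bigl(N^{(\sigma)}_{k,n}-N^{(\sigma)}_{k,n+1}\bigr)$, which is a finite combination of entries.

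\emph{Step 2: domain claim.} Let $h=\grad_{\rm D}g$, so that $g_n=\sum_{j\le n}h_j$, and put $R_m(n):=\sum_{k\ge n}N^{(\sigma)}_{m,k}$. This tail sum converges absolutely by \eqref{eq:asympt_lap_N}, or is finite when $\sigma\in\N$. Telescoping Step 1 in $n$ from $n$ to $\infty$ yields $R_m(n)=\sum_{j\le m}D^{(\sigma)}_{j,n}$. Consequently $D^{(\sigma)}_{m,n}=R_m(n)-R_{m-1}(n)$ is an $\mathcal O(n^{-2\sigma})$-type combination of tails, consistent with \eqref{eq:asympt_lap_D}. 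Since $D^{(\sigma)}_{m,n}=\mathcal O(n^{-2\sigma-2})$ and $N^{(\sigma)}_{m,n}\asymp n^{-2\sigma-1}$ with a fixed sign for large $n$, we have $h\in\mathcal D^{(\sigma)}_{\rm N}$ exactly when $\sum|h_n|n^{-2\sigma-1}<\infty$ (when $\sigma\in\N$ there is nothing to prove). It then remains to show $\sum_n n^{-2\sigma-2}\bigl|\sum_{j\le n}h_j\bigr|<\infty$. This follows from Fubini: $\sum_n n^{-2\sigma-2}\sum_{j\le n}|h_j|\le C\sum_j|h_j|j^{-2\sigma-1}$. The same argument with $\sigma+1$ in place of $\sigma$ gives $g\in\mathcal D^{(\sigma+1)}_{\rm D}$, since the decay is even faster.

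\emph{Step 3: summation by parts.} For fixed $m$, compute
\[
\sum_n D^{(\sigma)}_{m,n}g_n=\sum_n D^{(\sigma)}_{m,n}\sum_{j\le n}h_j=\sum_j h_j\sum_{n\ge j}D^{(\sigma)}_{m,n},
\]
where the interchange is justified by the absolute bound in Step 2. Using Step 1, $\sum_{n\ge j}\bigl(D^{(\sigma)}_{m,n}-D^{(\sigma)}_{m-1,n}\bigr)=N^{(\sigma)}_{m,j}$, because the telescoping tail $N^{(\sigma)}_{m,n+1}\to0$. Hence $\grad_{\rm D}(D^{(\sigma)}g)_m=\sum_j N^{(\sigma)}_{m,j}h_j$, which is \eqref{eq:intertwining_extended}. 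For \eqref{eq:factorisation_extended}, apply $\grad_{\rm D}^*$, a finite difference, to both sides. This gives $\grad_{\rm D}^*\grad_{\rm D}D^{(\sigma)}g=(-\lap_{\rm D})D^{(\sigma)}g$. It then remains to identify this with $D^{(\sigma+1)}g$. Since $-\lap_{\rm D}$ is banded, this reduces to the entrywise identity $D^{(\sigma+1)}=(-\lap_{\rm D})D^{(\sigma)}$ from the $\ell^2$ calculus, applied to $e_n$, together with exchanging a finite sum with an absolutely convergent one.

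The main obstacle is the justification of the interchanges in Step 3 for general $g\in C(\N)$, which may grow. It requires the sharp tail bound $\sum_{n\ge j}|D^{(\sigma)}_{m,n}|\lesssim j^{-2\sigma-1}$, matching the weight that characterises $\mathcal D^{(\sigma)}_{\rm N}$. This bound comes from \eqref{eq:asympt_lap_D} together with \eqref{eq:asympt_lap_N}.
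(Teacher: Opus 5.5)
Your proof is correct, but it reaches both identities differently from the paper.

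For \eqref{eq:intertwining_extended}, the paper sums by parts up to a cut-off $N$. This leaves the boundary term $(-\lap_{\rm N})^\sigma_{m,N}g_N$, and to remove it the paper proves the extra growth estimate $g_n=o(n^{2\sigma+1})$. You instead substitute $g_n=\sum_{j\le n}h_j$ and swap the order of summation. The swap is justified by the Tonelli bound $\sum_n n^{-2\sigma-2}\sum_{j\le n}|h_j|\le C\sum_j|h_j|j^{-2\sigma-1}$, which you already need for the domain claim. After that, telescoping the entrywise identity in the column index only needs $(-\lap_{\rm N})^\sigma_{m,n}\to0$, so no growth estimate on $g$ is required.

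For \eqref{eq:factorisation_extended}, the paper runs a second summation by parts on the entrywise version of \eqref{eq:factorisation}, again with a boundary term. You instead apply the finite difference $\grad_{\rm D}^*$ to \eqref{eq:intertwining_extended}. You then use $\grad_{\rm D}^*\grad_{\rm D}=-\lap_{\rm D}$ on $C(\N)$ and the finite-sum relation $(-\lap_{\rm D})^{\sigma+1}_{m,n}=\sum_k(-\lap_{\rm D})_{m,k}(-\lap_{\rm D})^{\sigma}_{k,n}$, which follows from the $\ell^2$ calculus applied to basis vectors. This is shorter. It also shows directly that $\mathcal{D}_{\rm D}^{(\sigma)}\subseteq\mathcal{D}_{\rm D}^{(\sigma+1)}$, because each entry of $(-\lap_{\rm D})^{\sigma+1}$ is a combination of at most three entries of $(-\lap_{\rm D})^{\sigma}$ in neighbouring rows. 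So the asymptotics of $(-\lap_{\rm D})^{\sigma+1}$ are not needed. The paper's route treats both identities by the same template, each resting only on its own entrywise identity, whereas yours makes the factorisation a corollary of the intertwining identity.

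One correction: the aside in Step~2 is false as written. The identity $R_m(n)=\sum_{j\le m}(-\lap_{\rm D})^\sigma_{j,n}$ already fails for $m=n=1$. On the left, $R_1(1)=\sum_k(-\lap_{\rm N})^\sigma_{1,k}=0$, because $(-\lap_{\rm N})^\sigma$ annihilates constant sequences; for $\sigma=1$ this is just $1-1=0$. On the right, $(-\lap_{\rm D})^\sigma_{1,1}=\bigl\|(-\lap_{\rm D})^{\sigma/2}e_1\bigr\|^2>0$. Telescoping Step~1 correctly gives $\sum_{k\ge n}(-\lap_{\rm D})^\sigma_{m,k}=\sum_{j\le m}(-\lap_{\rm N})^\sigma_{j,n}$. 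The stated consequence $(-\lap_{\rm D})^\sigma_{m,n}=R_m(n)-R_{m-1}(n)$ is therefore also false. Nothing later depends on it, since your domain claim rests on \eqref{eq:asympt_lap_D} and \eqref{eq:asympt_lap_N}, so simply delete it.
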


\begin{proof}
    Without loss of generality, we may assume that $\sigma\notin\N$.
    Indeed, for integer $\sigma$ all the operators appearing in
    \eqref{eq:intertwining_extended} and
    \eqref{eq:factorisation_extended} are represented by banded
    matrices, and both identities follow from
    Lemma~\ref{lem:factorisation} for any $g\in C(\N)$.

    We first establish a summability property of $g$. By \eqref{eq:asympt_lap_N}, the assumption $\grad_{\rm D}g\in\mathcal{D}_{\rm N}^{(\sigma)}$ implies
    \begin{equation}\label{eq:aux_grad_summability}
        \sum_{n=1}^\infty \frac{|\grad_{\rm D}g_n|}{n^{2\sigma+1}}<\infty.
    \end{equation}
    Since $g_n=\sum_{k=1}^n\grad_{\rm D}g_k$, changing the order of summation gives
    \begin{equation}
        \label{eq:aux_g_summability}
      \sum_{n=1}^\infty \frac{|g_n|}{n^{2\sigma+2}}\leq\sum_{k=1}^\infty|\grad_{\rm D}g_k|\sum_{n=k}^\infty\frac{1}{{n^{2\sigma+2}}}
        \leq C_\sigma\sum_{k=1}^\infty\frac{|\grad_{\rm D}g_k|}{k^{2\sigma+1}}<\infty,
    \end{equation}
    where $C_\sigma>0$ depends only on $\sigma$. Moreover, \eqref{eq:aux_grad_summability} implies
    \begin{equation}\label{eq:aux_g_growth}
        g_n=o(n^{2\sigma+1})\quad\text{as }n\to\infty.
    \end{equation}
    Indeed, for $m<n$ we have
    \[
        \frac{|g_n|}{n^{2\sigma+1}} \leq \frac{|g_m|}{n^{2\sigma+1}}+\sum_{k=m+1}^n\frac{|\grad_{\rm D}g_k|}{n^{2\sigma+1}}
        \leq \frac{|g_m|}{n^{2\sigma+1}}+\sum_{k=m+1}^{n}\frac{|\grad_{\rm D}g_k|}{k^{2\sigma+1}}.
    \]
    Sending first $n\to\infty$ and then $m\to\infty$ implies \eqref{eq:aux_g_growth}. Using \eqref{eq:asympt_lap_D} together with \eqref{eq:aux_g_summability}, one shows that $g\in\mathcal{D}_{\rm D}^{(\sigma)}\cap
    \mathcal{D}_{\rm D}^{(\sigma+1)}$.

    Next we prove \eqref{eq:intertwining_extended}. By convention, we put $(-\lap_{\rm D})^\sigma_{0,n}:=0$ for all $n\in\N$.
    Applying \eqref{eq:intertwining} to the standard basis vectors of $\ell^2(\N)$ gives
    \[
        (-\lap_{\rm N})^\sigma_{m,n}-(-\lap_{\rm N})^\sigma_{m,n+1}=(-\lap_{\rm D})^\sigma_{m,n}-(-\lap_{\rm D})^\sigma_{m-1,n}
    \]
    for all $m,n\in\N$. Hence, for any $m,N\in\N$, summation by parts yields
    \begin{align}
        \sum_{n=1}^N (-\lap_{\rm N})^\sigma_{m,n}\grad_{\rm D}g_n
        &=(-\lap_{\rm N})^\sigma_{m,N}g_N + \sum_{n=1}^{N-1}\left((-\lap_{\rm N})^\sigma_{m,n}-(-\lap_{\rm N})^\sigma_{m,n+1}\right)g_n \nonumber\\
        &=(-\lap_{\rm N})^\sigma_{m,N}g_N + \sum_{n=1}^{N-1}\left((-\lap_{\rm D})^\sigma_{m,n}-(-\lap_{\rm D})^\sigma_{m-1,n}\right)g_n.
        \label{eq:aux_parts_intertwining}
    \end{align}
    The sum on the left-hand side converges absolutely as $N\to\infty$ by the assumption on $\grad_{\rm D}g$.
    The sum on the right-hand side also converges absolutely since $g\in\mathcal{D}_{\rm D}^{(\sigma)}$.
    Moreover, \eqref{eq:asympt_lap_N} and \eqref{eq:aux_g_growth} imply $(-\lap_{\rm N})^\sigma_{m,N}g_N\to0$ as $N\to\infty$.
    Passing to the limit $N\to\infty$ in \eqref{eq:aux_parts_intertwining}, we conclude, for all $m\in\N$, that
    \[
        (-\lap_{\rm N})^\sigma\grad_{\rm D}g_m =(-\lap_{\rm D})^\sigma g_m - (-\lap_{\rm D})^\sigma g_{m-1},
    \]
    where the second term on the right is understood as zero for $m=1$. This proves \eqref{eq:intertwining_extended}.

    To prove \eqref{eq:factorisation_extended}, we temporarily denote $D_{m,n}:=(-\lap_{\rm N})^\sigma_{m,n}-(-\lap_{\rm N})^\sigma_{m+1,n}$ for $m,n\in\N$.  Applying \eqref{eq:factorisation} to the standard basis vectors of $\ell^2(\N)$ yields
    \[
    (-\lap_{\rm D})^{\sigma+1}_{m,n} =D_{m,n}-D_{m,n+1}
    \]
    for all $m,n\in\N$. As above, summation by parts gives
    \begin{equation}
        \sum_{n=1}^N D_{m,n}\grad_{\rm D}g_n=D_{m,N}g_N + \sum_{n=1}^{N-1}(D_{m,n}-D_{m,n+1})g_n
        =D_{m,N}g_N + \sum_{n=1}^{N-1}(-\lap_{\rm D})^{\sigma+1}_{m,n}g_n.
        \label{eq:aux_parts_factorisation}
    \end{equation}
    The sum on the left-hand side converges absolutely by the assumption on $\grad_{\rm D}g$. The sum on the right-hand side converges absolutely since $g\in\mathcal{D}_{\rm D}^{(\sigma+1)}$. Finally, \eqref{eq:asympt_lap_N} together with \eqref{eq:aux_g_growth} implies $D_{m,N}g_N\to0$ as $N\to\infty$ for any $m\in\N$ fixed. Passing to the limit $N\to\infty$ in \eqref{eq:aux_parts_factorisation} proves \eqref{eq:factorisation_extended} pointwise.
\end{proof}

\subsection{A two-term ground state representation}

The key idea in deriving the Hardy weights for Theorems \ref{thm:main} and \ref{thm:new_opt_weight} relies on establishing a suitable ground state representation formula~\cite{fra-sei_08}. We will combine two ground state formulas. First, we recall the following local weighted ground state transform, which has been proven in several papers; see, for example, \cite[Eq.~(1.10)]{huang-ye_24} or \cite[Theorem~0]{sta-wac_24}.

\begin{lemma}\label{lem:weighted_hardy}
    Let $V,g\in C(\N)$ and $g_{n}>0$ for all $n\in\N$. Then for all $u\in C_0(\N)$, we have the identity
    \begin{equation}
        \label{eq:weighted_hardy}
        \sum_{n=1}^\infty  V_{n} \abs{\grad_{\mathrm{D}} u_{n}}^{2} - \sum_{n=1}^\infty  \frac{\grad_{\mathrm{D}}^{*}V\grad_{\mathrm{D}}\, g_{n}}{g_n}\abs{u_{n}}^{2}
          = \sum_{n=1}^\infty  V_{n+1}g_{n+1}g_{n}\abs{\frac{u_{n+1}}{g_{n+1}}-\frac{u_{n}}{g_{n}}}^2.
    \end{equation}
\end{lemma}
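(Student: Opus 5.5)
The identity is purely algebraic and involves only finite sums, since $u\in C_0(\N)$. I will expand both sides and compare coefficients of $|u_n|^2$ and of the cross terms $\overline{u_n}u_{n+1}$.

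\emph{Left-hand side, first sum.} Use the convention $u_0=0$, so that $\grad_{\mathrm{D}}u_1=u_1$. Then
\[
\sum_{n\ge1}V_n\abs{u_n-u_{n-1}}^2=\sum_{n\ge1}(V_n+V_{n+1})\abs{u_n}^2-2\,\mathrm{Re}\sum_{n\ge1}V_{n+1}\overline{u_n}u_{n+1}.
\]

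\emph{Left-hand side, second sum.} Write out the potential term. We have $\grad_{\mathrm{D}}g_n=g_n-g_{n-1}$ with $g_0:=0$, and
\[
\grad_{\mathrm{D}}^*V\grad_{\mathrm{D}}g_n=V_n(g_n-g_{n-1})-V_{n+1}(g_{n+1}-g_n).
\]
Dividing by $g_n$ gives
\[
\frac{\grad_{\mathrm{D}}^*V\grad_{\mathrm{D}}g_n}{g_n}=V_n+V_{n+1}-V_n\frac{g_{n-1}}{g_n}-V_{n+1}\frac{g_{n+1}}{g_n},
\]
and for $n=1$ the term with $g_0$ vanishes.

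\emph{Right-hand side.} Expand
\[
V_{n+1}g_{n+1}g_n\abs{\frac{u_{n+1}}{g_{n+1}}-\frac{u_n}{g_n}}^2
=V_{n+1}\frac{g_n}{g_{n+1}}\abs{u_{n+1}}^2+V_{n+1}\frac{g_{n+1}}{g_n}\abs{u_n}^2-2V_{n+1}\,\mathrm{Re}\,\overline{u_n}u_{n+1}.
\]
Sum over $n\ge1$ and shift the index in the first term. The coefficient of $\abs{u_n}^2$ becomes
\[
V_n\frac{g_{n-1}}{g_n}\mathbf{1}_{n\ge2}+V_{n+1}\frac{g_{n+1}}{g_n}.
\]
Since $g_0=0$, this equals $V_n\frac{g_{n-1}}{g_n}+V_{n+1}\frac{g_{n+1}}{g_n}$ for all $n\ge1$.

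\emph{Comparison.} Subtracting the second sum from the first on the left, the coefficient of $\abs{u_n}^2$ is
\[
(V_n+V_{n+1})-\Bigl(V_n+V_{n+1}-V_n\frac{g_{n-1}}{g_n}-V_{n+1}\frac{g_{n+1}}{g_n}\Bigr)=V_n\frac{g_{n-1}}{g_n}+V_{n+1}\frac{g_{n+1}}{g_n}.
\]
This matches the right-hand side. The cross terms $-2\,\mathrm{Re}\sum V_{n+1}\overline{u_n}u_{n+1}$ agree identically.

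All sums are finite because $u$ is finitely supported, so no convergence issues arise. The only point requiring care is the boundary index $n=1$. It is handled consistently by the Dirichlet convention $u_0=g_0=0$, which is built into $\grad_{\mathrm{D}}$. No real obstacle is expected.
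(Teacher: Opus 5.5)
Your proof is correct. Expanding both sides into coefficients of $|u_n|^2$ and of the nearest-neighbour cross terms, with the convention $u_0=g_0=0$ (built into $\grad_{\mathrm{D}}$) handling $n=1$, is all that is needed. All sums are finite for $u\in C_0(\N)$, so no convergence issue arises.

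The paper gives no proof of this lemma; it only cites \cite[Eq.~(1.10)]{huang-ye_24} and \cite[Theorem~0]{sta-wac_24}. Your coefficient comparison is therefore a self-contained replacement for that citation.

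One small notational fix is needed, because the lemma allows complex $V\in C(\N)$. In the expansion of $\sum_n V_n|\grad_{\mathrm{D}}u_n|^2$ you should not pull $\mathrm{Re}$ outside a sum weighted by $V_{n+1}$. The correct cross term is $-\sum_{n}V_{n+1}\bigl(\overline{u_n}u_{n+1}+u_n\overline{u_{n+1}}\bigr)$. The right-hand side produces exactly the same expression, because $g$ is real. So the identity, and your conclusion, are unaffected.
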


The next lemma gives an elementary ground state transform for the fractional Laplacians $(-\lap_{\mathrm{X}})^{\sigma}$, $\mathrm{X}\in\{\mathrm{D},\mathrm{N}\}$, analogous to results proven in \cite[Proposition~3.4]{das-fer_26} and \cite[Theorem~3.2]{sta-wac_26}. Its proof is straightforward, but is included for convenience.

\begin{lemma}\label{lem:frac_lap_N_id}
    Let $\mathrm{X}\in\{\mathrm{D},\mathrm{N}\}$, $\sigma>0$, and $h\in\mathcal{D}_{\mathrm{X}}^{(\sigma)}$ be such that $h_n>0$ for all $n\in\N$. Then for all $v\in C_0(\N)$, we have     \begin{equation}
        \label{eq:frac_lap_N_id}
        \sum_{n=1}^\infty  \overline{v_n} (-\lap_{\mathrm{X}})^\sigma v_n - \sum_{n=1}^\infty \frac{(-\lap_{\mathrm{X}})^\sigma h_n}{h_n} \abs{v_n}^2
          = -\!\!\sum_{1\leq n<m} \!(-\lap_{\mathrm{X}})^\sigma_{m,n}\,h_m\,h_n\abs{\frac{v_m}{h_m}-\frac{v_n}{h_n}}^2.
    \end{equation}
\end{lemma}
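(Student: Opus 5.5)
The plan is to expand both sides for a finitely supported $v$ and verify the identity by direct algebra, since all sums involved are finite or absolutely convergent. Write $A=(-\lap_{\mathrm{X}})^\sigma$ with entries $A_{m,n}$, which are real and symmetric ($A_{m,n}=A_{n,m}$) by \eqref{eq:lap-dir_entries} and \eqref{eq:lap-neu_entries}. Because $v\in C_0(\N)$, the quadratic form is a finite sum:
\[
\sum_{n}\overline{v_n}(Av)_n=\sum_{m,n}A_{m,n}\overline{v_m}v_n .
\]
Since $h\in\mathcal{D}_{\mathrm{X}}^{(\sigma)}$, $(Ah)_n=\sum_m A_{n,m}h_m$ converges absolutely for each $n$. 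Only finitely many $n$ (those in the support of $v$) contribute to $\sum_n \frac{(Ah)_n}{h_n}|v_n|^2$, so that sum is also well defined.

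The second step is to set $w_n:=v_n/h_n$, so that $v_n=h_nw_n$, and to expand the right-hand side. Symmetrising the sum over $n<m$ into a sum over $m\neq n$ gives
\[
-\tfrac12\sum_{m\neq n}A_{m,n}h_mh_n\bigl(|w_m|^2+|w_n|^2-2\operatorname{Re}(\overline{w_m}w_n)\bigr).
\]
The cross term equals $\sum_{m\neq n}A_{m,n}\overline{v_m}v_n$, which is the off-diagonal part of the quadratic form. It is a finite sum because $w$ has finite support.

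By symmetry of $A$, the square terms combine to
\[
-\sum_{n}|w_n|^2h_n\sum_{m\neq n}A_{n,m}h_m=-\sum_n|w_n|^2h_n\bigl((Ah)_n-A_{n,n}h_n\bigr).
\]
Each inner sum converges absolutely because $h\in\mathcal{D}_{\mathrm{X}}^{(\sigma)}$, and only finitely many $n$ appear. This expression equals
\[
-\sum_n\frac{(Ah)_n}{h_n}|v_n|^2+\sum_nA_{n,n}|v_n|^2 .
\]
Adding it to the cross term recovers $\sum_{m,n}A_{m,n}\overline{v_m}v_n-\sum_n\frac{(Ah)_n}{h_n}|v_n|^2$, which is the left-hand side of \eqref{eq:frac_lap_N_id}.

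The only delicate point is justifying the splitting of $|w_m-w_n|^2$ into separate sums over the infinite index set $\{m\neq n\}$. The terms $|w_m|^2$ with $m$ outside the support vanish. The terms $|w_n|^2$ involve an infinite sum in $m$, and this sum converges absolutely precisely by the assumption $h\in\mathcal{D}_{\mathrm{X}}^{(\sigma)}$. The cross terms vanish unless both indices lie in the support. So every rearrangement is a finite combination of absolutely convergent series, and the identity follows. No sign assumption on $A_{m,n}$ is needed at this stage; signs matter only later, when the remainder is used as a non-negative term.
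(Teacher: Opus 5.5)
Your proof is correct and follows essentially the same approach as the paper. Like the paper, you expand $h_mh_n\abs{v_m/h_m-v_n/h_n}^2$, weight it by the real symmetric entries $(-\lap_{\mathrm{X}})^\sigma_{m,n}$, and resum using symmetry; you sum over $m\neq n$ and treat the diagonal explicitly, whereas the paper sums over all $m,n$ (where the diagonal terms cancel automatically), and your absolute-convergence justification via $h\in\mathcal{D}_{\mathrm{X}}^{(\sigma)}$ and the finite support of $v$ spells out what the paper leaves implicit.
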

\begin{proof}
    Multiplying both sides of the equality
    \[
        h_m h_n\abs{\frac{v_m}{h_m}-\frac{v_n}{h_n}}^2=\frac{h_n}{h_m}\abs{v_m}^2+\frac{h_m}{h_n}\abs{v_n}^2 -\overline{v_m}v_n-\overline{v_n}v_m
    \]
    by $(-\lap_{\mathrm{X}})^\sigma_{m,n}$, summing over all $m,n\in\N$, and using the symmetry of the expressions, we get
    \[
        \sum_{m,n=1}^{\infty} (-\lap_{\mathrm{X}})^\sigma_{m,n}\,h_m\,h_n\abs{\frac{v_m}{h_m}-\frac{v_n}{h_n}}^2=
        2\sum_{n=1}^\infty \frac{(-\lap_{\mathrm{X}})^\sigma h_n}{h_n} \abs{v_n}^2-2\sum_{n=1}^\infty  \overline{v_n} (-\lap_{\mathrm{X}})^\sigma v_n,
    \]
    which implies the claim.
\end{proof}

If $\sigma\in(0,1]$, the off-diagonal matrix entries of $(-\lap_\mathrm{D})^\sigma$ are non-positive (see \cite[Proposition~2.2]{das-fer_26}), and therefore the remainder term on the right-hand side of~\eqref{eq:frac_lap_N_id}, with $\mathrm{X}=\mathrm{D}$, is non-negative. If, on the other hand, $\sigma\in(1,3/2)$, the summands vary in sign. In order to show that the remainder is non-negative even in this case, we combine identities \eqref{eq:frac_lap_N_id}, with $\mathrm{X}=\mathrm{N}$, and \eqref{eq:weighted_hardy} to produce a ground state identity with two terms on the right, which are both non-negative under additional assumptions on the sequence $g$.

\begin{proposition}\label{prop:two-term-repre}
    Let $\sigma\in(1,3/2)$. Suppose $g\in C(\N)$ is such that $\grad_\mathrm{D}g\in\mathcal{D}_\mathrm{N}^{(\sigma-1)}$ with $g_n>0$ and $\grad_\mathrm{D} g_n>0$ for all $n\in\N$. Then for all $u\in C_0(\N)$, we have
    \[
        \sum_{n=1}^\infty \overline{u_n} (-\lap_{\mathrm{D}})^\sigma u_n - \sum_{n=1}^\infty \frac{(-\lap_\mathrm{D})^\sigma g_n}{g_n}\abs{u_n}^2 = R_{\mathrm{N}}[g,u]+R_{\loc}[g,u],
    \]
    where 
    \[
        R_{\mathrm{N}}[g,u]:=-\!\!\sum_{1\leq n<m} \!(-\lap_{\mathrm{N}})^{\sigma-1}_{m,n}\,\grad_{\mathrm{D}}g_m\grad_{\mathrm{D}}g_n\abs{\frac{\grad_{\mathrm{D}}u_m}{\grad_{\mathrm{D}}g_m}-\frac{\grad_{\mathrm{D}}u_n}{\grad_{\mathrm{D}}g_n}}^2
    \]
    and
    \[
        R_{\loc}[g,u]:=\sum_{n=1}^{\infty}\frac{(-\lap_\mathrm{N})^{\sigma-1}\grad_{\mathrm{D}}g_{n+1}}{\grad_{\mathrm{D}}g_{n+1}}\,g_{n+1}\,g_n\,\abs{\frac{u_{n+1}}{g_{n+1}}-\frac{u_{n}}{g_n}}^{2}.
    \]
\end{proposition}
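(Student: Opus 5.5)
The plan is to combine the factorisation of Lemma~\ref{lem:factorisation} with the two ground state identities. Fix $u\in C_0(\N)$ and put $v:=\grad_{\mathrm{D}}u$, which again lies in $C_0(\N)$. Also put $h:=\grad_{\mathrm{D}}g$; by assumption $h_n>0$ and $h\in\mathcal{D}_{\mathrm{N}}^{(\sigma-1)}$. Since $u\in\ell^2(\N)$, the factorisation \eqref{eq:factorisation} with exponent $\sigma-1>0$ gives
\[
\sum_{n}\overline{u_n}(-\lap_{\mathrm{D}})^\sigma u_n=\sum_n\overline{v_n}(-\lap_{\mathrm{N}})^{\sigma-1}v_n .
\]
We then apply Lemma~\ref{lem:frac_lap_N_id} with $\mathrm{X}=\mathrm{N}$, exponent $\sigma-1$, and the positive sequence $h$. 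This yields
\[
\sum_n\overline{v_n}(-\lap_{\mathrm{N}})^{\sigma-1}v_n=\sum_n V_n|\grad_{\mathrm{D}}u_n|^2+R_{\mathrm{N}}[g,u],\qquad V_n:=\frac{(-\lap_{\mathrm{N}})^{\sigma-1}h_n}{h_n}.
\]
The remainder is exactly $R_{\mathrm{N}}[g,u]$ as defined in the statement.

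Next we treat the local term $\sum_n V_n|\grad_{\mathrm{D}}u_n|^2$ with Lemma~\ref{lem:weighted_hardy}. We take this $V$, which is a well-defined element of $C(\N)$ because $h\in\mathcal{D}_{\mathrm{N}}^{(\sigma-1)}$, together with the positive sequence $g$. This gives
\[
\sum_n V_n|\grad_{\mathrm{D}}u_n|^2=\sum_n\frac{\grad_{\mathrm{D}}^*V\grad_{\mathrm{D}}g_n}{g_n}|u_n|^2+\sum_n V_{n+1}g_{n+1}g_n\Bigl|\frac{u_{n+1}}{g_{n+1}}-\frac{u_n}{g_n}\Bigr|^2 .
\]
The last sum is precisely $R_{\loc}[g,u]$.

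It remains to identify the potential. We have $V\grad_{\mathrm{D}}g=V h=(-\lap_{\mathrm{N}})^{\sigma-1}\grad_{\mathrm{D}}g$ pointwise, so $\grad_{\mathrm{D}}^*V\grad_{\mathrm{D}}g=\grad_{\mathrm{D}}^*(-\lap_{\mathrm{N}})^{\sigma-1}\grad_{\mathrm{D}}g$. By Lemma~\ref{lem:factorisation_extended}, applied with exponent $\sigma-1$, this equals $(-\lap_{\mathrm{D}})^{\sigma}g$ pointwise, and the same lemma guarantees $g\in\mathcal{D}_{\mathrm{D}}^{(\sigma)}$. The left-hand side of the claim is therefore meaningful, and combining the three displays gives the identity.

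The only delicate point is this last step. The potential must be identified for a sequence $g$ that need not lie in $\ell^2(\N)$, and that is exactly what Lemma~\ref{lem:factorisation_extended} supplies. Everything else consists of finite sums, since $u$ and $v$ are finitely supported, except the absolute convergence of the double sum in $R_{\mathrm{N}}$. That convergence is already covered by the hypothesis $h\in\mathcal{D}_{\mathrm{N}}^{(\sigma-1)}$ inside Lemma~\ref{lem:frac_lap_N_id}. Note also that the positivity $\grad_{\mathrm{D}}g_n>0$ is needed only so that $h$ can serve as a ground state in Lemma~\ref{lem:frac_lap_N_id}.
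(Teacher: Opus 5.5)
Your proposal is correct and follows the paper's proof essentially step for step. You pass to $v=\grad_{\mathrm{D}}u$ via the factorisation of Lemma~\ref{lem:factorisation}, apply Lemma~\ref{lem:frac_lap_N_id} with $\mathrm{X}=\mathrm{N}$ and $h=\grad_{\mathrm{D}}g$, then Lemma~\ref{lem:weighted_hardy} with $V=(-\lap_{\mathrm{N}})^{\sigma-1}h/h$, and identify $\grad_{\mathrm{D}}^*V\grad_{\mathrm{D}}g=(-\lap_{\mathrm{D}})^\sigma g$ pointwise through Lemma~\ref{lem:factorisation_extended}, with the same justification for the absolute convergence of the double sum.
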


\begin{proof}
For $u\in C_0(\N)$, put $v:=\grad_{\mathrm{D}}u\in C_0(\N)$. The factorisation~\eqref{eq:factorisation} implies
    \[
     \sum_{n=1}^\infty\overline{u_n}(-\lap_{\mathrm{D}})^\sigma u_n=\sum_{n=1}^\infty\overline{v_n}(-\lap_{\mathrm{N}})^{\sigma-1}\, v_n.
    \]
Further, set $V_n:=(-\lap_\mathrm{N})^{\sigma-1}h_n/h_n$, where $h:=\grad_\mathrm{D}g$. Then Lemma~\ref{lem:frac_lap_N_id}, with $\mathrm{X}=\mathrm{N}$, yields
    \begin{equation}
    \sum_{n=1}^\infty\overline{u_n}(-\lap_{\mathrm{D}})^\sigma u_n
     -\sum_{n=1}^\infty V_n|v_n|^2
     =-\sum_{1\leq n<m}(-\lap_{\mathrm{N}})^{\sigma-1}_{m,n}\,h_mh_n
       \abs{\frac{v_m}{h_m}-\frac{v_n}{h_n}}^2.
    \label{eq:gr_rep_1_inproof}
    \end{equation}
On the other hand, we have the pointwise equation  
    \[
    \grad_\mathrm{D}^* V\grad_\mathrm{D}g=\grad_\mathrm{D}^*(-\lap_\mathrm{N})^{\sigma-1}\grad_\mathrm{D}g=(-\lap_\mathrm{D})^\sigma g,
    \]
   where the second equality holds by Lemma~\ref{lem:factorisation_extended}. Consequently, Lemma~\ref{lem:weighted_hardy} gives
    \begin{equation}
    \sum_{n=1}^\infty V_n|v_n|^2 = \sum_{n=1}^\infty \frac{(-\lap_\mathrm{D})^\sigma g_n}{g_n}|u_n|^2
     +\sum_{n=1}^\infty V_{n+1}g_{n+1}g_{n}\abs{\frac{u_{n+1}}{g_{n+1}}-\frac{u_{n}}{g_{n}}}^2.
    \label{eq:gr_rep_2_inproof}
    \end{equation}
Combining \eqref{eq:gr_rep_1_inproof} and \eqref{eq:gr_rep_2_inproof} and substituting the expressions for $h$, $v$, and $V$ proves the claim. The double sum is absolutely convergent because $v$ is finitely supported and $h\in\mathcal{D}_{\mathrm{N}}^{(\sigma-1)}$; the local sum is finite.
\end{proof}

\begin{remark}
Bearing in mind \eqref{eq:lap_N_neg}, the non-local remainder $R_\mathrm{N}[g,u]$ is obviously non-negative under the assumptions of Proposition~\ref{prop:two-term-repre}. On the other hand, the non-negativity of the local remainder $R_\loc[g,u]$ is guaranteed by the additional assumption $(-\lap_\mathrm{N})^{\sigma-1}\grad_{\mathrm{D}}g_{n}\ge0$ for all $n\ge2$.    
\end{remark}

\subsection{A parameter sequence from the Riesz potential}
\label{subsec:riesz}

Proposition \ref{prop:two-term-repre} requires a~single input: a sequence $g$, which we refer to as the \emph{parameter sequence}. 
We will make two specific choices for the parameter sequence, which will produce our Theorems \ref{thm:main} and \ref{thm:new_opt_weight}. 

The first choice is inspired by~\cite{das-fer_26} and uses the Riesz potential sequence associated with $(-\lap_{\rm D})^{\sigma}$. We adopt the notation from~\cite{das-fer_26} and denote, for $n\in\N$ and $\alpha\in(0,3/2)$, the Riesz sequence by
\begin{equation}
    \mathcal{I}_{\alpha}(n):=\sqrt{\frac2\pi}\int_{0}^{\pi}\frac{\sin\theta}{\lambda^{\alpha}(\theta)}\sin(n\theta)\dd\theta
    =C(\alpha)\,\frac{n\Gamma(n+\alpha-1)}{\Gamma(n-\alpha+2)},
    \label{eq:def_I_alp}
\end{equation}
where $C(\alpha)=2^{3/2-2\alpha}\Gamma(3/2-\alpha)/\Gamma(\alpha)$; see \cite[Eqs.~(3.1) and (3.2)]{das-fer_26}. For this range of $\alpha$, the integral in \eqref{eq:def_I_alp} converges and
\begin{equation}
 \mathcal{I}_{\alpha}(n)>0 \quad\mbox{ for all } n\in\N.
 \label{eq:I_alp_pos}
\end{equation}

Further properties of $\mathcal{I}_\alpha$, which we will use below,  are collected in the next lemma. Claim~(ii) was proved in
\cite[Proposition~3.2]{das-fer_26} under the additional restriction $\alpha<1+\sigma$.

\begin{lemma}\label{lem:prop_I_alp}
    Let $\alpha\in(0,3/2)$ and $\sigma>0$. Then we have:
    \begin{enumerate}[label=\normalfont(\roman*), itemsep=2pt]
    \item $\mathcal{I}_{\alpha}\in\mathcal{D}_{\mathrm{D}}^{(\sigma)}$ and $\grad_{\mathrm{D}}\,\mathcal{I}_{\alpha}\in\mathcal{D}_{\mathrm{N}}^{(\sigma)}$.
    \item If $\alpha>\sigma$, then $(-\lap_{\mathrm{D}})^{\sigma}\,\mathcal{I}_{\alpha}=\mathcal{I}_{\alpha-\sigma}$.
    \item If $\alpha>\sigma$, then $(-\lap_{\mathrm{N}})^\sigma\,\grad_{\mathrm{D}}\,\mathcal{I}_\alpha=\grad_{\mathrm{D}}\,\mathcal{I}_{\alpha-\sigma}$.
    \item If $\alpha>\sigma+1$, then $\grad_{\mathrm{D}}^{*}(-\lap_{\mathrm{N}})^{\sigma}\,\grad_{\mathrm{D}}\,\mathcal{I}_{\alpha}=\mathcal{I}_{\alpha-\sigma-1}$.
    \item If $\alpha>1$, then $\grad_{\mathrm{D}}\mathcal{I}_{\alpha}(n)>0$ for all $n\in\N$.
    \end{enumerate}
\end{lemma}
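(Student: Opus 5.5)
The plan is to get everything from the explicit closed form~\eqref{eq:def_I_alp}, the Gamma ratio asymptotics~\eqref{eq:Gamma_ratio_asymp}, and Lemma~\ref{lem:factorisation_extended}. I would prove the items in the order (v), (i), (ii), (iii), (iv).

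\textbf{Step 1: closed form for the gradient, giving (v).} Using $\Gamma(n+\alpha-1)=(n+\alpha-2)\Gamma(n+\alpha-2)$ and $\Gamma(n-\alpha+2)=(n-\alpha+1)\Gamma(n-\alpha+1)$, the difference $\mathcal I_\alpha(n)-\mathcal I_\alpha(n-1)$ has the common factor $C(\alpha)\Gamma(n+\alpha-2)/\Gamma(n-\alpha+2)$. The remaining bracket is $n(n+\alpha-2)-(n-1)(n-\alpha+1)=(\alpha-1)(2n-1)$. Hence
\[
\grad_{\mathrm D}\mathcal I_\alpha(n)=C(\alpha)(\alpha-1)(2n-1)\frac{\Gamma(n+\alpha-2)}{\Gamma(n-\alpha+2)},\qquad n\in\N .
\]
For $n=1$ this is consistent with $\mathcal I_\alpha(1)=C(\alpha)\Gamma(\alpha)/\Gamma(3-\alpha)$, with $\alpha=1$ handled by continuity. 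For $1<\alpha<3/2$ every factor is positive, which gives (v).

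\textbf{Step 2: growth and absolute convergence, giving (i).} By \eqref{eq:Gamma_ratio_asymp}, $\mathcal I_\alpha(n)=O(n^{2\alpha-2})$ and $\grad_{\mathrm D}\mathcal I_\alpha(n)=O(n^{2\alpha-3})$. For non-integer $\sigma$, \eqref{eq:asympt_lap_D} and \eqref{eq:asympt_lap_N} show that the $m$-th row sums are dominated by $\sum_n n^{2\alpha-2\sigma-4}$ and $\sum_n n^{2\alpha-2\sigma-4}$, respectively. Both converge since $\alpha<3/2$. For integer $\sigma$ the matrices are banded, so there is nothing to prove. In particular this covers $\sigma+1$ as well, i.e.\ $\mathcal I_\alpha\in\mathcal D_{\mathrm D}^{(\sigma+1)}$.

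\textbf{Step 3: the identity (ii), the main obstacle.} Let $f(\theta)=\sin\theta\,\lambda^{-\alpha}(\theta)$, so that $\mathcal I_\alpha(n)=\sqrt{2/\pi}\int_0^\pi f\sin(n\theta)\,\dd\theta$. Let $g_m(\theta)=\lambda^\sigma(\theta)\sin(m\theta)$, whose sine coefficients are, up to the normalisation $\sqrt{2/\pi}$, the entries $(-\lap_{\mathrm D})^\sigma_{m,n}$.

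The goal is the Parseval-type identity
\[
\sum_n (-\lap_{\mathrm D})^\sigma_{m,n}\,\mathcal I_\alpha(n)=\sqrt{2/\pi}\int_0^\pi f g_m\,\dd\theta .
\]
Its right-hand side is exactly $\mathcal I_{\alpha-\sigma}(m)$, and this integral converges because $0<\alpha-\sigma<3/2$.

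The difficulty is that $f\notin L^1$ when $\alpha\ge1$, since $f(\theta)\sim\theta^{1-2\alpha}$; only $\theta f\in L^1$ holds. This is presumably where the restriction $\alpha<1+\sigma$ in \cite{das-fer_26} came from. I would argue via Abel means, as follows.
\begin{itemize}[label=--]
\item The series converges absolutely by Step 2, so it equals $\lim_{r\to1^-}\sum_n r^n(\cdots)$.
\item The Abel sum equals $\sqrt{2/\pi}\int_0^\pi f\,(P_r*g_m)\,\dd\theta$, where $P_r$ is the Poisson kernel acting on the odd $2\pi$-periodic extension of $g_m$. For fixed $r<1$ the exchange of sum and integral is legitimate because $\sum r^n\sin(n\theta)$ is $O(\theta)$ uniformly, and $\theta f\in L^1$.
\item The odd extension of $g_m$ is Lipschitz, so $P_r*g_m$ is odd with derivatives bounded uniformly in $r$. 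Hence $|P_r*g_m(\theta)|\le C\theta$ uniformly in $r$.
\item Dominated convergence, with dominating function $C\theta|f|\in L^1$, then yields the limit $\sqrt{2/\pi}\int_0^\pi f g_m\,\dd\theta$.
\end{itemize}
Near $\theta=\pi$ the function $\lambda$ is bounded away from $0$, so nothing extra is needed there.

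\textbf{Step 4: deducing (iii) and (iv).} For (iii), apply \eqref{eq:intertwining_extended} with $g=\mathcal I_\alpha$; its hypothesis is supplied by (i). Combined with (ii) this gives $(-\lap_{\mathrm N})^\sigma\grad_{\mathrm D}\mathcal I_\alpha=\grad_{\mathrm D}\mathcal I_{\alpha-\sigma}$.

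For (iv), apply \eqref{eq:factorisation_extended}, again legitimate by (i), which gives $\grad_{\mathrm D}^*(-\lap_{\mathrm N})^\sigma\grad_{\mathrm D}\mathcal I_\alpha=(-\lap_{\mathrm D})^{\sigma+1}\mathcal I_\alpha$. Then use (ii) with exponent $\sigma+1<\alpha$ to conclude that this equals $\mathcal I_{\alpha-\sigma-1}$.
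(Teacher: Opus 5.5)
Your proposal is correct. Items (i), (iii), (iv) and (v) follow the paper essentially verbatim. For (v), your closed form $C(\alpha)(\alpha-1)(2n-1)\Gamma(n+\alpha-2)/\Gamma(n-\alpha+2)$ is equivalent to the paper's recurrence
\[
\grad_{\mathrm D}\mathcal I_\alpha(n)=\frac{(2n-1)(\alpha-1)}{(n-1)(n-\alpha+1)}\,\mathcal I_\alpha(n-1).
\]

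The genuine difference is in (ii), where you and the paper regularise on opposite sides.

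\emph{The paper's route.} It regularises the vector, replacing $\lambda^{-\alpha}$ by $(\lambda+\varepsilon)^{-\alpha}$. Then $\mathcal I_\alpha^\varepsilon\in\ell^2(\N)$, and $(-\lap_{\mathrm D})^\sigma\mathcal I_\alpha^\varepsilon$ is an explicit integral by the functional calculus. It passes to $\varepsilon\to0_+$ in two places:
\begin{itemize}
\item on the matrix side, via the uniform bound $|\mathcal I_\alpha^\varepsilon(n)|\le K_\alpha n$ together with $\sum_n|(-\lap_{\mathrm D})^\sigma_{m,n}|\,n<\infty$, which comes from \eqref{eq:asympt_lap_D};
\item on the integral side, by dominated convergence with majorant $\theta^2\lambda^{-\alpha}$.
\end{itemize}

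\emph{Your route.} You regularise the row instead, by Abel summation, which amounts to Poisson smoothing of $g_m=\lambda^\sigma\sin(m\cdot)$. The uniform bound $|P_r*g_m(\theta)|\le C\theta$ follows from the $C^1$ regularity of the odd extension of $g_m$. This regularity holds for every $\sigma>0$, since $\sin(m\theta)$ vanishes at $0$.

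\emph{Comparison.} Your route uses only convergence of the row series (from (i)) and Abel's theorem, and avoids the $\ell^2$ functional calculus entirely. The paper's route needs the decay of the matrix entries to justify the $\varepsilon$-limit.

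One small imprecision: in your fixed-$r$ Fubini step, the bound you need is $\sum_n r^n|(-\lap_{\mathrm D})^\sigma_{m,n}|\,|\sin(n\theta)|\le C_r\theta$, not a bound on $\sum_n r^n\sin(n\theta)$. It is immediate from boundedness of the entries.

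Neither regularisation is actually necessary. The two ingredients the paper uses already justify Fubini directly:
\begin{itemize}
\item $|\sin(n\theta)|\le n\theta$, together with $\theta\sin\theta\,\lambda^{-\alpha}(\theta)\in L^1(0,\pi)$;
\item $\sum_n|(-\lap_{\mathrm D})^\sigma_{m,n}|\,n<\infty$.
\end{itemize}
Moreover, the absolutely and uniformly convergent sine series $\sum_n(-\lap_{\mathrm D})^\sigma_{m,n}\sin(n\theta)$ equals $\lambda^\sigma(\theta)\sin(m\theta)$. So the obstacle ``$f\notin L^1$'' that you single out is resolved by the weight $\theta$ in every version of the argument.
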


\begin{proof}
    Claim (i): Using~\eqref{eq:Gamma_ratio_asymp} and \eqref{eq:def_I_alp}, one deduces that
    \[
    \mathcal{I}_{\alpha}(n)=\mathcal{O}\!\left(n^{2\alpha-2}\right)
    \quad\mbox{ and }\quad
    \grad_{\mathrm{D}}\,\mathcal{I}_{\alpha}(n)=\mathcal{O}\!\left(n^{2\alpha-3}\right)
    \]
    for $n\to\infty$. Together with \eqref{eq:asympt_lap_D} and \eqref{eq:asympt_lap_N} it follows, for any $m\in\N$, that
    \[
    (-\lap_{\mathrm{D}})^{\sigma}_{m,n}\,\mathcal{I}_{\alpha}(n)=\mathcal{O}\!\left(n^{2(\alpha-\sigma)-4}\right)
    \quad\mbox{ and }\quad
    (-\lap_{\mathrm{N}})^{\sigma}_{m,n}\grad_{\mathrm{D}}\,\mathcal{I}_{\alpha}(n)=\mathcal{O}\!\left(n^{2(\alpha-\sigma)-4}\right)
    \]
    as $n\to\infty$. Consequently, the series
    \[
    \sum_{n=1}^{\infty}(-\lap_{\mathrm{D}})^{\sigma}_{m,n}\,\mathcal{I}_{\alpha}(n)
    \quad\mbox{ and }\quad
    \sum_{n=1}^{\infty}(-\lap_{\mathrm{N}})^{\sigma}_{m,n}\grad_{\mathrm{D}}\,\mathcal{I}_{\alpha}(n)
    \]
    are absolutely convergent whenever $\alpha-\sigma<3/2$, which is guaranteed by the assumptions.

    Claim (ii): For $\varepsilon>0$, we consider the regularisation
    \[
    \mathcal{I}_{\alpha}^\varepsilon(n):=\sqrt{\frac2\pi}\int_{0}^{\pi}\frac{\sin\theta}{(\lambda(\theta)+\varepsilon)^{\alpha}}\sin(n\theta)\dd\theta, \quad n\in\N
    \]
    of~\eqref{eq:def_I_alp}. Since the function $\theta\mapsto\sin\theta/(\lambda(\theta)+\varepsilon)^{\alpha}$ is in $L^{2}(0,\pi)$, its inverse sine transform $\mathcal{I}_{\alpha}^{\varepsilon}\in\ell^{2}(\N)$ for any $\varepsilon>0$. Using the elementary inequality
    $|\sin(n\theta)|\leq n\theta$, we deduce the estimate
    \begin{equation}\label{eq:uniform}
    \abs{\mathcal{I}_\alpha^\varepsilon(n)}\leq K_\alpha n,
    \end{equation}
    for all $\varepsilon>0$ and $n\in\N$, where
    \[
     K_\alpha:=\sqrt{\frac2\pi}\int_0^\pi
    \frac{\theta\sin\theta}{\lambda^\alpha(\theta)}\dd\theta<\infty,
    \]
    since the integrand is $\mathcal{O}(\theta^{2-2\alpha})$ as $\theta\to0_+$.
    Dominated convergence then implies that  $\mathcal{I}_{\alpha}^\varepsilon(n)\to\mathcal{I}_\alpha(n)$ as $\varepsilon\to0_+$ for any $n\in\N$. Using the same bound~\eqref{eq:uniform}, together with~\eqref{eq:asympt_lap_D}, the dominated convergence also gives
    \begin{equation}
        \lim_{\varepsilon\to0_{+}}(-\lap_{\mathrm{D}})^{\sigma}\,\mathcal{I}_{\alpha}^{\varepsilon}(n)=(-\lap_{\mathrm{D}})^{\sigma}\,\mathcal{I}_{\alpha}(n)
        \label{eq:lim_act_laps}
    \end{equation}
    for all $n\in\N$.

    Next, by~\eqref{eq:laps_SC_M}, we have
    \[
    (-\lap_{\mathrm{D}})^{\sigma}\,\mathcal{I}_{\alpha}^\varepsilon(n)=\sqrt{\frac2\pi}\int_0^\pi\frac{\lambda^\sigma(\theta)\sin\theta}{(\lambda(\theta)+\varepsilon)^\alpha}\sin(n\theta)\dd\theta
    \]
    for any $\varepsilon>0$ and $n\in\N$. The absolute value of the integrand, up to an $\varepsilon$-independent multiplicative constant, is bounded by the function $\theta^{2}/\lambda^{\alpha}(\theta)$, which is integrable for $\alpha<3/2$. Therefore, the dominated convergence implies
    \[
    \lim_{\varepsilon\to0_{+}}(-\lap_{\mathrm{D}})^{\sigma}\,\mathcal{I}_{\alpha}^\varepsilon(n)=\sqrt{\frac2\pi}\int_0^\pi
    \frac{\sin\theta}{\lambda^{\alpha-\sigma}(\theta)}\sin(n\theta)\dd\theta=\mathcal{I}_{\alpha-\sigma}(n)
    \]
    for $\alpha>\sigma$. The assumption $\alpha>\sigma$ simply ensures a positive index for the Riesz sequence on the right
    (although the integral defining $\mathcal{I}_\alpha$ also converges for non-positive $\alpha$). Bearing~\eqref{eq:lim_act_laps} in mind, claim (ii) follows.

    Claims (iii) and (iv): By putting $g:=\mathcal{I}_{\alpha}$ in Lemma~\ref{lem:factorisation_extended} and taking into account the already proven claims (i) and (ii), we deduce claims (iii) and (iv) from equations~\eqref{eq:intertwining_extended} and \eqref{eq:factorisation_extended}, respectively.

    Claim (v): From~\eqref{eq:def_I_alp}, one readily computes that
    \begin{equation}
    \label{eq:aux_rec_inproof}
     \grad_{\mathrm{D}}\,\mathcal{I}_{\alpha}(n)=\frac{(2n-1)(\alpha-1)}{(n-1)(n-\alpha+1)}\,\mathcal{I}_{\alpha}(n-1)
    \end{equation}
    for $n\geq2$. Recalling~\eqref{eq:I_alp_pos}, we see that $\grad_{\mathrm{D}}\,\mathcal{I}_{\alpha}(n)>0$ for all $1<\alpha<3/2$ and $n\geq2$. For $n=1$, $\grad_{\mathrm{D}}\mathcal{I}_{\alpha}(1)=\mathcal{I}_{\alpha}(1)>0$.
\end{proof}

\begin{remark}
    For $0<\alpha<1/2$, the sequence $\mathcal{J}_{\alpha}:=\grad_{\mathrm{D}}\,\mathcal{I}_{\alpha+1}$ plays the role of the Riesz potential for the Neumann fractional Laplacian $(-\lap_{\mathrm{N}})^{\alpha}$; one can show that
    \[
    \mathcal{J}_{\alpha}(n)=\sqrt{\frac2\pi}\int_0^\pi\frac{\cos(\theta/2)}{\lambda^{\alpha}(\theta)}\cos\bigl((n-\tfrac12)\theta\bigr)\dd\theta.
    \]
    Moreover, by Lemma~\ref{lem:prop_I_alp}(iii), we have $(-\lap_{\mathrm{N}})^{\sigma}\,\mathcal{J}_{\alpha}=\mathcal{J}_{\alpha-\sigma}$ for $0<\sigma<\alpha<1/2$.
\end{remark}

\subsection{A parameter sequence defined by a Gamma ratio}

In order to deduce the optimal Hardy weight \eqref{eq:new_opt_weight}, we will make use of the sequence 
\begin{equation}\label{eq:def_g_sigma}
    g^{(\sigma)}_n:= \frac{\Gamma(n+\sigma-1/2)}{\Gamma(n)},\qquad n\in\N,
\end{equation}
in place of the Riesz sequence. This specific choice is motivated by our previous work~\cite{sta-wac_26}, where it was used to derive the optimal weights \eqref{eq:sta-wac_opt_weight} for the Toeplitz Laplacian $(-\lap)^\sigma$ for all $\sigma>0$. Auxiliary properties needed in the proof of Theorem~\ref{thm:new_opt_weight} are summarised in the next lemma.

\begin{lemma}\label{lem:lap_g_sigma}
Let $\sigma>0$. Then we have:
\begin{enumerate}[label=\normalfont(\roman*), itemsep=2pt]
\item $g^{(\sigma)}\in\mathcal{D}_{\mathrm{D}}^{(\sigma)}\cap\mathcal{D}_{\mathrm{N}}^{(\sigma)}$.
\item If $\sigma>1$, then $\grad_{\mathrm{D}}g^{(\sigma)}\in\mathcal{D}_{\mathrm{N}}^{(\sigma-1)}$ and $\grad_{\mathrm{D}}g^{(\sigma)}_n>0$ for all $n\in\N$.
\item For all $n\in\N$, 
\begin{equation}\label{eq:lapD_g_sigma}
 (-\lap_{\mathrm{D}})^{\sigma}g^{(\sigma)}_n=\frac{\Gamma^2(\sigma+1/2)}{\pi}\left(\frac{\Gamma(n-1/2)}{\Gamma(n+\sigma)}+\sin(\pi\sigma)\frac{\Gamma(n+1-\sigma)}{\Gamma(n+3/2)}\right).
\end{equation}
\item For all $n\in\N$, 
\begin{equation}\label{eq:lapN_g_sigma}
 (-\lap_{\mathrm{N}})^{\sigma}g^{(\sigma)}_n=\frac{\Gamma^2(\sigma+1/2)}{\pi}\left(\frac{\Gamma(n-1/2)}{\Gamma(n+\sigma)}-\sin(\pi\sigma)\frac{\Gamma(n-\sigma)}{\Gamma(n+1/2)}\right).
\end{equation}
\item If $\sigma\in(0,3/2)$, then $(-\lap_{\mathrm{D}})^{\sigma}g^{(\sigma)}_n>0$ for all $n\in\N$.
\item If $\sigma\in(0,1/2)$, then $(-\lap_{\mathrm{N}})^{\sigma}g^{(\sigma)}_n>0$ for all $n\in\N$.
\end{enumerate}
For $\sigma\in\N$, formulas \eqref{eq:lapD_g_sigma} and \eqref{eq:lapN_g_sigma} are to be interpreted as the respective limits.
\end{lemma}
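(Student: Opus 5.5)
The plan is to compute everything through the Fourier integral
\[
a_k(\sigma):=\frac{1}{\pi}\int_0^\pi\lambda^\sigma(\theta)\cos(k\theta)\dd\theta=\frac{(-1)^k\Gamma(2\sigma+1)}{\Gamma(\sigma+k+1)\Gamma(\sigma-k+1)},
\]
which is exactly what produces the matrix entries \eqref{eq:lap-dir_entries} and \eqref{eq:lap-neu_entries}. Claims (i) and (ii) are routine. By \eqref{eq:Gamma_ratio_asymp}, $g^{(\sigma)}_n=\mathcal O(n^{\sigma-1/2})$ and $\grad_{\mathrm D}g^{(\sigma)}_n=(\sigma-\tfrac12)\Gamma(n+\sigma-\tfrac32)/\Gamma(n)=\mathcal O(n^{\sigma-3/2})$ for $n\ge2$, with $\grad_{\mathrm D}g^{(\sigma)}_1=\Gamma(\sigma+\tfrac12)>0$. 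Combining these with \eqref{eq:asympt_lap_D} and \eqref{eq:asympt_lap_N} gives summands of order $n^{-\sigma-5/2}$, $n^{-\sigma-3/2}$ and $n^{-\sigma-3/2}$ (the last for $(-\lap_{\mathrm N})^{\sigma-1}$ against the gradient), so all three series converge absolutely. Positivity of the gradient for $\sigma>1$ is immediate from the displayed formula.

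For (iii) and (iv), I will write $g^{(\sigma)}_n$ as a Beta-type integral, e.g.
\[
\frac{\Gamma(n+\sigma-\frac12)}{\Gamma(n)}=\frac{1}{\Gamma(\frac12-\sigma)}\int_0^1 t^{n-1}(1-t)^{-\sigma-1/2}\,\dd t
\]
for $\sigma<\tfrac12$, and continue analytically in $\sigma$ afterwards. Substituting this, the row sums $\sum_n(-\lap_{\mathrm X})^\sigma_{m,n}t^{n-1}$ become generating functions of the coefficients $a_k(\sigma)$. These are hypergeometric: $\sum_{k\ge0}a_k t^k$ is essentially a ${}_2F_1(-\sigma,1;\sigma+1;t)$-type series, closely related to $(1-t)^{2\sigma}$ since $\sum_{k\in\Z}a_k t^k=(1-t)^\sigma(1-t^{-1})^\sigma$ formally. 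Integrating against $(1-t)^{-\sigma-1/2}$ and using Gauss's summation theorem should yield two Gamma ratios.

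An alternative route, which I expect to be cleaner, is to reuse the Toeplitz computation from \cite{sta-wac_26}, where $(-\lap)^\sigma g^{(\sigma)}_n=4^\sigma\Gamma^2(\sigma+\tfrac12)\pi^{-1}\Gamma(n-\tfrac12)/\Gamma(n+\sigma)$ in the normalisation that produces \eqref{eq:sta-wac_opt_weight}. Then only the Hankel correction needs to be evaluated. For Dirichlet this correction is $-\sum_n a_{m+n}g_n$, and for Neumann it is $-\sum_n a_{m+n-1}g_n$. Using the reflection form $a_k=-\Gamma(2\sigma+1)\sin(\pi\sigma)\pi^{-1}\Gamma(k-\sigma)/\Gamma(k+\sigma+1)$, each correction is a well-poised ${}_2F_1$ at argument $1$ (a Gauss sum). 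This produces the $\sin(\pi\sigma)\Gamma(n+1-\sigma)/\Gamma(n+3/2)$ term, respectively the $-\sin(\pi\sigma)\Gamma(n-\sigma)/\Gamma(n+1/2)$ term; the duplication formula should absorb the $4^\sigma$ and $\Gamma(2\sigma+1)$ factors. Integer $\sigma$ is then handled by continuity in $\sigma$, using the uniform absolute convergence established in (i).

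For (v), when $\sin(\pi\sigma)\ge0$, i.e.\ $\sigma\le1$, both terms in \eqref{eq:lapD_g_sigma} are positive. The only exception is $n=1$ with $\sigma\in\N$, but $\sigma=1$ can be checked directly. For $1<\sigma<3/2$ the second term is negative, and I must show
\[
\frac{\Gamma(n-\tfrac12)\Gamma(n+\tfrac32)}{\Gamma(n+\sigma)\Gamma(n+1-\sigma)}>|\sin(\pi\sigma)|.
\]
The left side is log-convex-ratio monotone in $n$; I expect it to increase to $1$. Indeed, $\Gamma(n+a)\Gamma(n+b)$ with fixed sum $a+b$ decreases as the spread shrinks, and the spread of $\{-\tfrac12,\tfrac32\}$ exceeds that of $\{\sigma,1-\sigma\}$, so the quotient is at least its value at $n=1$, namely $\Gamma(\tfrac12)\Gamma(\tfrac52)/(\Gamma(1+\sigma)\Gamma(2-\sigma))=\tfrac34\pi\sin(\pi(\sigma-1))/(\pi\sigma(1-\sigma)(\sigma-1))$ up to rearrangement. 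Comparing that with $|\sin\pi\sigma|$ reduces to $\tfrac34>\sigma(\sigma-1)$, which holds on $(1,3/2)$. Claim (vi) is similar: for $\sigma<\tfrac12$ I need $\Gamma(n-\tfrac12)\Gamma(n+\tfrac12)>\sin(\pi\sigma)\Gamma(n+\sigma)\Gamma(n-\sigma)$. At $n=1$ this follows from the reflection formula, $\pi\cdot 2\cdot\tfrac12\cdot\ldots$, and for larger $n$ it follows from the same spread monotonicity.

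The main obstacle is the hypergeometric evaluation in (iii) and (iv). In particular, the convergence and analytic continuation of the ${}_2F_1(1)$ sums for $\sigma\ge\tfrac12$, where the Gauss-sum parameters approach the boundary of convergence, will need care. I would first verify the identity for $0<\sigma<\tfrac12$ and extend it by analyticity in $\sigma$ of both sides.
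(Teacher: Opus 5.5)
Your route is the paper's: (i)--(ii) come from the asymptotics \eqref{eq:asympt_lap_D}--\eqref{eq:asympt_lap_N}. For (iii)--(iv) you split the entries into the Toeplitz part $c_{m-n}$, taken from \cite{sta-wac_26}, and a Hankel part evaluated by Gauss's theorem. For (v)--(vi) you use log-convexity of $\Gamma$. As written, however, several steps are wrong.

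First, the Neumann Hankel correction has the wrong sign. By \eqref{eq:lap-neu_entries}, or equivalently $2\cos A\cos B=\cos(A-B)+\cos(A+B)$, one has $(-\lap_{\mathrm N})^\sigma_{m,n}=c_{m-n}+c_{m+n-1}$, whereas $(-\lap_{\mathrm D})^\sigma_{m,n}=c_{m-n}-c_{m+n}$. Gauss's theorem with $a=m+1-\kappa-\sigma$, $b=\sigma+\tfrac12$, $c=m+2-\kappa+\sigma$ gives $\sum_n c_{m+n-\kappa}g^{(\sigma)}_n=-\pi^{-1}\Gamma^2(\sigma+\tfrac12)\sin(\pi\sigma)\Gamma(m+1-\kappa-\sigma)/\Gamma(m+\tfrac32-\kappa)$. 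Your minus sign would therefore produce $+\sin(\pi\sigma)$ in \eqref{eq:lapN_g_sigma}.

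Second, the Toeplitz sum carries no $4^\sigma$; it equals $\pi^{-1}\Gamma^2(\sigma+\tfrac12)\Gamma(m-\tfrac12)/\Gamma(m+\sigma)$. The $4^\sigma$ in \eqref{eq:sta-wac_opt_weight} is exactly what the duplication formula removes when rewriting $\Gamma(2n-1)/\Gamma(2n-1+2\sigma)$. In the Hankel part, $\Gamma(2\sigma+1)$ cancels directly against $\Gamma(c-a)=\Gamma(2\sigma+1)$, with no duplication needed.

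Third, your convergence worry is unfounded. Here $c-a-b=\sigma+\tfrac12>0$ for every $\sigma>0$, which is just the $n^{-\sigma-3/2}$ decay from (i). So Gauss's theorem applies directly for all non-integer $\sigma$, and no continuation from $\sigma<\tfrac12$ is needed.

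Two smaller slips. Your Beta integral evaluates to $\Gamma(n)/\Gamma(n+\tfrac12-\sigma)$, not $g^{(\sigma)}_n$; the correct kernel is $t^{n+\sigma-3/2}(1-t)^{-\sigma-1/2}$. In (ii) the summands are $\mathcal{O}(n^{-\sigma-1/2})$, i.e.\ (i) applied with $\sigma-1$, not $\mathcal{O}(n^{-\sigma-3/2})$; they are still summable.

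In (v), the reduction to $n=1$ is false. Let $\rho(n)$ be your quotient. Then
$\rho(n+1)/\rho(n)=(n^2+n-\tfrac34)/(n^2+n+\sigma(1-\sigma))<1$ for $\sigma\in(1,\tfrac32)$, so $\rho$ decreases strictly to $1$. Moreover $\rho(1)=3|\sin(\pi\sigma)|/(4\sigma(\sigma-1))>1$, so checking $n=1$ says nothing about larger $n$.

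What actually proves (v) is your preceding sentence. Since $t\mapsto\Gamma(x+t)\Gamma(x-t)$ is increasing on $[0,x)$, take $x=n+\tfrac12$ and compare $t=|\sigma-\tfrac12|<1$ with $t=1$. This gives $\rho(n)>1\ge|\sin(\pi\sigma)|$ for every $n$ and every $\sigma\in(0,\tfrac32)$, with no case split at $\sigma=1$. This is precisely the paper's argument. The same comparison with $x=n$, $t=\sigma<\tfrac12$ versus $t=\tfrac12$ gives (vi) for all $n$, including $n=1$.
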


\begin{proof}
We will assume $\sigma\notin\N$; for integer $\sigma$, claims (i), (ii) are elementary and claims (iii), (iv) extend by continuity. 

Claim (i): By definition~\eqref{eq:def_g_sigma} and asymptotic formula~\eqref{eq:Gamma_ratio_asymp}, we have $g^{(\sigma)}_n=\mathcal{O}(n^{\sigma-1/2})$ for $n\to\infty$. Using also \eqref{eq:asympt_lap_D} and \eqref{eq:asympt_lap_N}, we find, for any $m\in\N$, that 
\[
(-\lap_{\rm D})^{\sigma}_{m,n}g_{n}^{(\sigma)}=\mathcal{O}(n^{-\sigma-5/2}) 
\quad\mbox{ and }\quad
(-\lap_{\rm N})^{\sigma}_{m,n}g_{n}^{(\sigma)}=\mathcal{O}(n^{-\sigma-3/2})
\]
as $n\to\infty$, which implies claim~(i).

Claim (ii): Let $\sigma>1$. By the Gamma recurrence formula, we find that 
\begin{equation}
    \grad_{\rm D}g_n^{(\sigma)}=\left(\sigma-\frac{1}{2}\right)g_{n}^{(\sigma-1)}
    \label{eq:aux_rec_inproof2}
\end{equation}
for all $n\in\N$. Since $g_{n}^{(\sigma-1)}>0$ for all $n\in\N$, $\grad_{\mathrm{D}}g^{(\sigma)}_n>0$ for all $n\in\N$. 
By claim (i), we also have $\grad_{\mathrm{D}}g^{(\sigma)}\in\mathcal{D}_{\mathrm{N}}^{(\sigma-1)}$.

Claims (iii) and (iv): By~\eqref{eq:lap-dir_entries} and~\eqref{eq:lap-neu_entries}, we have 
\[
(-\lap_{\mathrm{D}})^{\sigma}_{m,n}=c_{m-n}-c_{m+n} 
\quad\mbox{ and }\quad 
(-\lap_{\mathrm{N}})^{\sigma}_{m,n}=c_{m-n}+c_{m+n-1},
\]
with
\[
 c_k:=\frac{(-1)^{k}\Gamma(2\sigma+1)}{\Gamma(\sigma+k+1)\Gamma(\sigma-k+1)},\qquad k\in\Z.
\]
Here, the first term $c_{m-n}$ coincides with the matrix entry $(-\lap)^{\sigma}_{m,n}$ of the Toeplitz operator $(-\lap)^{\sigma}$ studied in \cite{sta-wac_26}. In particular, it is shown in \cite[Lemma~4.2]{sta-wac_26} that
\[
 \sum_{n=1}^{\infty}c_{m-n}\,g^{(\sigma)}_n=\frac{\Gamma^2(\sigma+1/2)}{\pi}\frac{\Gamma(m-1/2)}{\Gamma(m+\sigma)}
\]
for any $m\in\N$. For the Hankel parts, we obtain from the Euler reflection formula~\eqref{eq:euler_reflection} the expressions
\[
 c_{m+n-\kappa}=-\frac{\Gamma(2\sigma+1)\sin(\pi\sigma)}{\pi}\frac{\Gamma(m+n-\kappa-\sigma)}{\Gamma(m+n-\kappa+\sigma+1)}, \quad\kappa\in\{0,1\}.
\]
Then an application of Gauss's hypergeometric theorem~\cite[Eq.~(15.4.20)]{dlmf} in the form
\[
 \sum_{j=0}^{\infty}\frac{\Gamma(a+j)\Gamma(b+j)}{\Gamma(1+j)\Gamma(c+j)}=\frac{\Gamma(a)\Gamma(b)\Gamma(c-a-b)}{\Gamma(c-a)\Gamma(c-b)},
\]
with $c-a-b>0$ and $a,b$ not non-positive integers, yields
\begin{align*}
 \sum_{n=1}^{\infty}c_{m+n-\kappa}\,g^{(\sigma)}_n
 &=-\frac{\Gamma(2\sigma+1)\sin(\pi\sigma)}{\pi}
 \sum_{n=1}^{\infty}\frac{\Gamma(m+n-\sigma-\kappa)\Gamma(n+\sigma-1/2)}{\Gamma(n)\Gamma(m+n+1+\sigma-\kappa)}\\
 &=-\frac{\Gamma^2(\sigma+1/2)\sin(\pi\sigma)}{\pi}\frac{\Gamma(m+1-\sigma-\kappa)}{\Gamma(m+3/2-\kappa)}.
\end{align*}
The choices $\kappa=0$ and $\kappa=1$ give the identities~\eqref{eq:lapD_g_sigma} and~\eqref{eq:lapN_g_sigma}, respectively.

Claim~(v): Since $(\log\Gamma)''>0$ on $(0,\infty)$, see~\cite[Eq.~(5.15.1)]{dlmf}, for every $x>0$, the even function 
$t\mapsto\log\Gamma(x+t)+\log\Gamma(x-t)$
is strictly convex on $[0,x)$, and so the function 
$t\mapsto\Gamma(x+t)\Gamma(x-t)$ is strictly increasing on $[0,x)$.

Let $\sigma\in(0,3/2)$. Taking $x=n+1/2$ and $t=|\sigma-1/2|<1$, we deduce the inequality 
\[
\Gamma(n+\sigma)\Gamma(n+1-\sigma)<\Gamma(n+3/2)\Gamma(n-1/2)
\]
for all $n\in\N$. Since $|\sin(\pi\sigma)|\leq1$, the expression in parentheses in \eqref{eq:lapD_g_sigma} is positive, which implies claim~(v).

Claim~(vi): Let $\sigma\in(0,1/2)$. Arguing as in the proof of (v), with $x=n$ and $t=\sigma<1/2$, which implies
the inequality 
\[
\Gamma(n+\sigma)\Gamma(n-\sigma)<\Gamma(n+1/2)\Gamma(n-1/2)
\]
for all $n\in\N$, we establish positivity of the expression in formula~\eqref{eq:lapN_g_sigma}, and so claim~(vi).
\end{proof}

\section{Proof of the first main result: Theorem~\ref{thm:main}}
\label{sec:proofs}

Throughout this section, we prove Theorem~\ref{thm:main} assuming $1<\sigma<\alpha<3/2$.
Put
\[
 s:=\sigma-1,\qquad p:=2\alpha-2,\qquad
 q:=4\alpha-2\sigma-2.
\]
Thus $0<s<1/2$ and $2s<p<1$. Further, we denote
\begin{equation}
 g:=\mathcal{I}_\alpha,\qquad h:=\grad_{\mathrm{D}}\,\mathcal{I}_\alpha,
 \qquad V:=\frac{(-\lap_\mathrm{N})^sh}{h},
 \label{eq:def_g,h,V}
\end{equation}
and
\begin{equation}
    a_n:=V_{n+1}g_{n+1}g_n, \qquad b_{m,n}:=-(-\lap_{\mathrm{N}})^s_{m,n},
    \label{eq:def_a,b}
\end{equation}
for $m,n\in\N$. By Lemma~\ref{lem:prop_I_alp}(i),(v),
we have $h\in\mathcal{D}_{\mathrm{N}}^{(s)}$ and $h_n>0$ for every
$n\in\N$, and by Lemma~\ref{lem:prop_I_alp}(iii),(v) also
\[
    V_n=\frac{\grad_{\mathrm{D}}\mathcal{I}_{\alpha-\sigma+1}(n)}
 {\grad_{\mathrm{D}}\mathcal{I}_\alpha(n)} >0,\qquad n\in\N,
\]
since $1<\alpha-\sigma+1<3/2$. For brevity, we also define
\[
 Q_{\alpha,\sigma}[u]:=\sum_{n=1}^\infty \overline{u_n} (-\lap_{\mathrm{D}})^\sigma u_n - \sum_{n=1}^\infty W_{\alpha,\sigma}(n)\abs{u_n}^2
\]
for $u\in\ell^{2}(\N)$, where $W_{\alpha,\sigma}=(-\lap_\mathrm{D})^\sigma \mathcal{I}_\alpha / \mathcal{I}_\alpha = \mathcal{I}_{\alpha-\sigma} / \mathcal{I}_\alpha$, which follows from
\eqref{eq:def_I_alp}, \eqref{eq:def_W}, and Lemma~\ref{lem:prop_I_alp}(ii).
Proposition~\ref{prop:two-term-repre} then reads
\begin{equation}\label{eq:main_Q_remainders}
 Q_{\alpha,\sigma}[u]=R_{\mathrm{N}}[u]+R_{\loc}[u], \quad \forall u\in C_0(\N),
\end{equation}
where
\[
R_{\mathrm{N}}[u]=\sum_{1\leq n<m}b_{m,n}h_mh_n
       \abs{\frac{(\grad_{\mathrm{D}}u)_m}{h_m}
              -\frac{(\grad_{\mathrm{D}}u)_n}{h_n}}^2
 \quad\mbox{ and }\quad
 R_{\loc}[u]=\sum_{n\geq1}a_n
       \abs{\frac{u_{n+1}}{g_{n+1}}-\frac{u_n}{g_n}}^2.
\]

We write $x_n\asymp y_n$ when the ratio $x_n/y_n$ is bounded from above and below by positive constants, which may depend only on $\alpha$ and $\sigma$, but not on indices or cut-off parameters. The Gamma ratio asymptotics~\eqref{eq:Gamma_ratio_asymp} used in the respective definitions imply the estimates
\begin{equation}\label{eq:main_comparisons}
 g_n\asymp n^p,\qquad h_n\asymp n^{p-1},\qquad
 V_n\asymp n^{-2s},\qquad a_n\asymp n^q,
 \qquad W_{\alpha,\sigma}(n)\asymp n^{-2\sigma}.
\end{equation}
Furthermore, by using~\eqref{eq:euler_reflection} together with~\eqref{eq:Gamma_ratio_asymp} in~\eqref{eq:lap-neu_entries}, one deduces
\begin{equation}\label{eq:main_kernel_bound}
 0<b_{m,n}\leq C_s|m-n|^{-1-2s} \quad\mbox{ for all } m\neq n,
\end{equation}
where the constant $C_s>0$ depends on $s$ only.

\subsection{Fractional Hardy inequality}\label{subsec:hardy}

By~\eqref{eq:lap_N_neg}, \eqref{eq:I_alp_pos}, and Lemma~\ref{lem:prop_I_alp}(v), $h_n>0$, $a_n>0$, and $b_{m,n}>0$ for all $m,n\in\N$, $m\neq n$. Consequently, both remainders in~\eqref{eq:main_Q_remainders} are non-negative, and hence $Q_{\alpha,\sigma}[u]\geq0$ for all $u\in C_0(\N)$.
By~\eqref{eq:main_comparisons}, the weight $W_{\alpha,\sigma}$ defines a bounded
multiplication operator on $\ell^{2}(\N)$. Since $(-\lap_{\mathrm{D}})^\sigma$ is also bounded (its operator norm equals $4^{\sigma}$),
density extends the inequality $Q_{\alpha,\sigma}[u]\geq0$ to all $u\in\ell^2(\N)$. This proves claim~(i) of Theorem~\ref{thm:main}.

\subsection{Criticality}\label{subsec:crit}

We prove claim~(ii) of Theorem~\ref{thm:main} in three steps. First, we establish a cut-off estimate, which is used in the next step to construct a null-sequence for $Q_{\alpha,\sigma}$, and so prove the criticality of $W_{\alpha,\sigma}$ for $\alpha\leq\alpha_\sigma^*$. The last step shows that $W_{\alpha,\sigma}$ is not critical for $\alpha>\alpha_\sigma^*$.

\emph{Step 1: A cut-off estimate.}
Let $\chi\in C^2(\R)$ be such that $0\leq\chi\leq1$, $\chi=1$ on $(-\infty,1]$ and $\chi=0$ on $[2,\infty)$. For an integer $N\geq2$, $x>0$, and $n\in\N$, define
\[
 \xi_N(x):=\chi\!\left(\frac{\log x}{\log N}\right)
 \quad\mbox{ and }\quad
u_n^N:=g_n\xi_N(n).
\]
In the following, $C$ denotes a generic positive constant depending only on $\alpha$, $\sigma$, and $\chi$.

\begin{lemma}
 For $\alpha\leq\alpha_\sigma^*$ and $N\geq2$, we have the estimate
\begin{equation}\label{eq:main_cutoff_energy}
 Q_{\alpha,\sigma}[u^N]\leq\frac{C}{\log N}.
\end{equation}
\end{lemma}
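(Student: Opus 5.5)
Since $u^N\in C_0(\N)$, we apply the two-term representation \eqref{eq:main_Q_remainders}, which gives $Q_{\alpha,\sigma}[u^N]=R_{\loc}[u^N]+R_{\mathrm{N}}[u^N]$. We then bound each remainder by $C/\log N$ separately.

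\emph{Local remainder.} Here $u^N_n/g_n=\xi_N(n)$, so
\[
R_{\loc}[u^N]=\sum_{n\geq1}a_n\,|\xi_N(n+1)-\xi_N(n)|^2 .
\]
This vanishes unless $N-1\le n\le N^2$. By the mean value theorem, $|\xi_N(n+1)-\xi_N(n)|\le C/(n\log N)$. Combining this with $a_n\asymp n^q$ from \eqref{eq:main_comparisons}, we get
\[
R_{\loc}[u^N]\le \frac{C}{\log^2 N}\sum_{N-1\le n\le N^2} n^{q-2}.
\]
The condition $\alpha\le\alpha_\sigma^*$ is exactly $q\le 1+2s-\ldots$; checking, $q=4\alpha-2\sigma-2\le 1$ iff $\alpha\le(3+2\sigma)/4=\alpha_\sigma^*$. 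Hence $q-2\le-1$, and the sum is at most $\sum_{N-1}^{N^2}n^{-1}\le C\log N$. This gives $R_{\loc}\le C/\log N$, and this is exactly where the critical threshold enters.

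\emph{Non-local remainder.} We write $(\grad_{\mathrm D}u^N)_m=h_m\xi_N(m)+g_{m-1}(\xi_N(m)-\xi_N(m-1))$, which follows from the discrete product rule with $g_0:=0$. Setting $\eta_m:=g_{m-1}(\xi_N(m)-\xi_N(m-1))/h_m$, we obtain
\[
\frac{(\grad_{\mathrm D}u^N)_m}{h_m}=\xi_N(m)+\eta_m .
\]
By \eqref{eq:main_comparisons}, $g_{m-1}/h_m\asymp m$. Hence $|\eta_m|\le C/\log N$, supported on $N\le m\le N^2+1$, and $|\eta_m-\eta_{m'}|$ is controlled by second differences of $\chi$: roughly $C|m-m'|/(m\log N)$ for $m'$ comparable to $m$. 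This is why $\chi\in C^2$ is needed. Using $|x+y|^2\le 2|x|^2+2|y|^2$, the bound \eqref{eq:main_kernel_bound} and $h_mh_n\asymp(mn)^{p-1}$, it then suffices to bound
\[
\sum_{n<m}\frac{(mn)^{p-1}}{|m-n|^{1+2s}}\,|f_m-f_n|^2 \quad\text{for } f=\xi_N \text{ and } f=\eta .
\]
We split into a near-diagonal region $n>m/2$ and a far region $n\le m/2$.

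\begin{itemize}
\item Near-diagonal region. We use $|f_m-f_n|\le C|m-n|/(m\log N)$, restricted to $m\lesssim N^2$ and $n\gtrsim N$. The inner sum over $|m-n|\le m$ gives $m^{1-2s}$, so the contribution is at most $\log^{-2}N\sum m^{2p-2-2+1-2s}\cdot m^{0}$. Counting powers gives $m^{2p-2s-3}\cdot m^{1}$ per $m$, i.e.\ $m^{q-2}$ again after noting $2p-2s-2=q-2$. Summing over the range of length $\log N$ gives $C/\log N$.
\item Far region. Here $|m-n|\asymp m$, and we bound $|f_m-f_n|^2\le C$ (times $\log^{-2}N$ for $\eta$). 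The difference is nonzero only if $m\ge N$ and $n\le N^2$. We sum $n^{p-1}$ up to $\min(m/2,N^2)$ and weight by $m^{p-2-2s}$. For $m\le N^2$ this yields $\sum_{m\ge N} m^{2p-2-2s}\sim N^{q-1}$, which is bounded when $q\le1$. This is not small by itself, and this is the main obstacle. To get the extra $1/\log N$ we refine: when both $n,m\ge N$, we use $|\xi_N(m)-\xi_N(n)|\le C\log(m/n)/\log N$. When $n<N$, the difference is $1-\xi_N(m)$, which is small unless $\log m\gtrsim\log N(1+\delta)$; we then exploit $q\le 1$ more carefully via $\sum_m m^{q-2}(1-\xi_N(m))^2$, which is $\lesssim \log^{-2}N\sum_{N}^{N^2} m^{q-2}\log^2(m/N)\cdot\ldots$. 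For $m>N^2$ with $n\le N^2$, the tail is summed using $m^{p-2-2s}N^{2p}$, which again gives $N^{2(q-1)}\cdot$ const.
\end{itemize}

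I expect this far-region bookkeeping, especially at the endpoint $q=1$, to be the delicate step. There the sums are logarithmic, and one must extract $\log^{-1}N$ from the smallness of $1-\xi_N(m)$ near $m=N$. If this fails, the fallback is to compare with the local computation via the identity $\sum_n b_{m,n}(h_n-h_m)\ldots$, i.e.\ to absorb the far part into $V_m\asymp m^{-2s}$. In that case the far contribution is dominated by $\sum_m V_m h_m^2 m^2|\xi_N(m)-\xi_N(m-1)|^2\asymp\sum a_m|\Delta\xi_N|^2$, which is already $C/\log N$.
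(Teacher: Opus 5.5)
The parts of your proposal you did not flag are sound: the local estimate, the product-rule splitting $(\grad_{\mathrm D}u^N)_m/h_m=\xi_N(m)+\eta_m$, the near-diagonal bounds, and the far-region bound for the correction $\eta$. For $\eta$, the prefactor $\log^{-2}N$ absorbs the $\log N$ coming from $\sum_{m\le N^2+1}m^{q-2}$.

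The gap is the far region $n\le m/2$ for $f=\xi_N$ at the endpoint $q=1$, i.e.\ $\alpha=\alpha_\sigma^*$. This is exactly the case needed later for criticality at $\alpha_\sigma^*$ and for optimality near infinity. Your bounds work for $q<1$, with constants blowing up as $q\to1$, but at $q=1$ they do not give $C/\log N$:
\begin{itemize}
\item In the tail $n\le N^2<m$, the crude bound $|\xi_N(m)-\xi_N(n)|\le1$ gives $N^{2p}N^{2(p-1-2s)}=N^{2(q-1)}$, which is of order one when $q=1$.
\item For $n<N$, your quantity $\log^{-2}N\sum_{m=N}^{N^2}m^{q-2}\log^2(m/N)$ is of order $\log N$ when $q=1$. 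This comes from summing $n^{p-1}$ over all $n\le m/2$, which produces $m^p$. But $|\xi_N(m)-\xi_N(n)|=1-\xi_N(m)$ arises only for $n<N$, so the correct weight is $N^pm^{p-2-2s}=m^{q-2}(N/m)^p$. The factor $(N/m)^p$ is what you are missing.
\end{itemize}
Your fallback, dominating the far part by $\sum_m a_m|\xi_N(m)-\xi_N(m-1)|^2$, is only asserted; no inequality is given for it.

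The missing idea is to use the logarithmic modulus of continuity for \emph{every} pair. From $|\xi_N'(x)|\le C/(x\log N)$ you get $|\xi_N(m)-\xi_N(n)|\le\frac{C}{\log N}\log\frac mn$ for all $1\le n<m$, including $n<N$ and $m>N^2$. Since $p<1+2s$, for fixed $n$ one has $\sum_{m\ge2n}m^{p-2-2s}\log^2(m/n)\le Cn^{p-1-2s}$. Hence the whole far region is bounded by $\frac{C}{\log^2N}\sum_{n\le N^2}n^{q-2}\le\frac{C}{\log N}$, uniformly in $q\le1$.

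This is essentially the paper's argument, except that the paper does not split off the correction term. It proves $|\eta_N'(x)|\le C/(x\log N)$ for the full ratio $\eta_N=(\grad_{\mathrm D}u^N)/h$, extended to real $x$, which gives the bound \eqref{eq:main_eta_increment} for all pairs. A single scaling argument with the convergent integral $\int_0^\infty(1+x)^{p-1}x^{-1-2s}\log^2(1+x)\,\dd x$ then handles near and far regions at once.

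A smaller point: your near-diagonal bound for $\eta_m$ needs $|r(m)-r(m-1)|\le C$ for $r(m)=g_{m-1}/h_m$. This does not follow from $g_{m-1}/h_m\asymp m$. It does follow from the explicit formula $r(x)=\frac{(x-1)(x-\alpha+1)}{(\alpha-1)(2x-1)}$ obtained from \eqref{eq:aux_rec_inproof}.
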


\begin{proof}
By differentiation, we get
\begin{equation}\label{eq:main_cutoff_derivatives}
 |\xi_N'(x)|\leq\frac{C}{x\log N}
 \quad\mbox{ and }\quad
 |\xi_N''(x)|\leq\frac{C}{x^2\log N}
\end{equation}
for all $x>0$ and $N\geq2$. Denote $\eta_{N}(n):=(\grad_{\mathrm{D}}u^N)_n/h_n$. Then $\eta_N(1)=\xi_N(1)$, and for $n\geq2$, we have
\[
 \eta_{N}(n)=\frac{\xi_{N}(n)g_n-\xi_{N}(n-1)g_{n-1}}{\grad_{\mathrm{D}}g_n}=\xi_{N}(n)+\frac{g_{n-1}}{\grad_{\mathrm{D}}g_n}\bigl(\xi_{N}(n)-\xi_{N}(n-1)\bigr).
\]
Recalling~\eqref{eq:aux_rec_inproof}, this becomes
\begin{equation}\label{eq:main_eta_formula}
 \eta_N(n)=\xi_N(n)+r(n)\bigl(\xi_N(n)-\xi_N(n-1)\bigr),
 \quad\mbox{ with }\quad
 r(x):=\frac{(x-1)(x-\alpha+1)}{(\alpha-1)(2x-1)}.
\end{equation}
For $x\geq2$, let us extend $\eta_N(n)$ to a function $\eta_{N}(x)$ by the expression from~\eqref{eq:main_eta_formula}.
Since $|r(x)|\leq Cx$ and $|r'(x)|\leq C$ for $x\geq2$, \eqref{eq:main_cutoff_derivatives} and
\[
 \xi_N(x)-\xi_N(x-1)=\int_{x-1}^x\xi_N'(t)\dd t,
 \qquad
 \xi_N'(x)-\xi_N'(x-1)=\int_{x-1}^x\xi_N''(t)\dd t
\]
imply
\[
|\eta_N'(x)|\leq \frac{C}{x\log N}
\]
for  $x\geq2$. Since $\chi=1$ on $(-\infty,1]$, we have $\eta_N(1)=\eta_N(2)$. Extend $\eta_N$ constantly to $[1,2]$. Then
\begin{equation}\label{eq:main_eta_increment}
 |\eta_N(m)-\eta_N(n)|\leq\int_{n}^{m}|\eta_N'(x)|\dd x\leq\frac{C}{\log N}\log\frac mn
 \quad\mbox{ for all } 1\leq n<m.
 \end{equation}

Now we may estimate the remainders in~\eqref{eq:main_Q_remainders}. Since both $\xi_N(n)$ and $\eta_N(n)$ vanish for $n\geq N^2+1$, the local remainder is bounded by
\begin{equation}\label{eq:main_local_cutoff}
 R_{\loc}[u^N]=\sum_{n=1}^{N^2+1}a_n\abs{\xi_N(n+1)-\xi_N(n)}^{2}
 \leq\frac{C}{\log^{2}N}\sum_{n=1}^{N^2+1}n^{q-2}\leq\frac{C}{\log N},
\end{equation}
where we used that $|\xi_N(n+1)-\xi_N(n)|\leq C/(n\log N)$, $a_n\leq C n^{q}$, see~\eqref{eq:main_comparisons}, and $q\leq1$, which is equivalent to the assumption $\alpha\leq\alpha_\sigma^*$.

For the non-local remainder, we deduce from \eqref{eq:main_comparisons}, \eqref{eq:main_kernel_bound}, and~\eqref{eq:main_eta_increment} the upper bound
\begin{align*}
 R_{\mathrm{N}}[u^N]&=\sum_{n=1}^{N^{2}+1}\sum_{j=1}^{\infty}b_{n+j,n}h_{n}h_{n+j}\abs{\eta_{N}(n+j)-\eta_{N}(n)}^{2} \\
 &\leq\frac{C}{\log^2N}
 \sum_{n=1}^{N^{2}+1}n^{2p-2s-2}\left[\frac{1}{n}\sum_{j=1}^{\infty}\frac{(1+j/n)^{p-1}}{(j/n)^{1+2s}}\log^2(1+j/n)\right].
\end{align*}
Up to a multiplicative constant, the term in square brackets can be majorised by the integral
\[
\int_{0}^{\infty}\frac{(1+x)^{p-1}}{x^{1+2s}}\log^2(1+x)\dd x,
\]
which converges for $0<s<1/2$ and $2s<p<1$. Consequently, bearing in mind that $2p-2s=q\leq1$, we obtain
\[
R_{\mathrm{N}}[u^N]\leq\frac{C}{\log^{2}N}\sum_{n=1}^{N^{2}+1}n^{q-2}\leq\frac{C}{\log N}.
\]
Together with~\eqref{eq:main_local_cutoff}, this proves \eqref{eq:main_cutoff_energy}.
\end{proof}

\emph{Step 2: Criticality for $\alpha\leq\alpha_\sigma^*$.}
Consider the sequence $u^{N}$ defined in Step~1. Since $\xi_N\to1$ pointwise, $u^N\to g$ pointwise as $N\to\infty$. Moreover, by~\eqref{eq:main_cutoff_energy}, $Q_{\alpha,\sigma}[u^N]\to0$ for $N\to\infty$.
Let $\widetilde{W}\geq0$ satisfy
\[
Q_{\alpha,\sigma}[u]\geq\sum_{n=1}^{\infty}\widetilde{W}_{n}|u_n|^{2}
\]
for all $u\in C_0(\N)$. Then, for every fixed $n\in\N$,
\[
 0\leq \widetilde{W}_{n}|u_n^N|^{2}\leq Q_{\alpha,\sigma}[u^N].
\]
Sending $N\to\infty$, we get $\widetilde{W}_n|g_n|^{2}=0$ for every $n\in\N$.
Since $g_n>0$, it follows that $\widetilde{W}_n=0$ for every $n\in\N$.
This proves the criticality of $W_{\alpha,\sigma}$ for $\alpha\leq\alpha_\sigma^*$.

\emph{Step 3: Non-criticality for $\alpha>\alpha_\sigma^*$.}
Suppose now that $\alpha>\alpha_\sigma^*$, i.e. $q>1$. Then, by~\eqref{eq:main_comparisons}, we have
\[
 A:=\sum_{n=1}^\infty \frac{1}{a_n}<\infty.
\]
For $u\in C_0(\N)$, Cauchy--Schwarz gives
\[
 \abs{\frac{u_1}{g_1}}^2=\abs{\sum_{n=1}^\infty\left(\frac{u_n}{g_n}-\frac{u_{n+1}}{g_{n+1}}\right)}^2
 \leq A\sum_{n=1}^\infty a_n\abs{\frac{u_n}{g_n}-\frac{u_{n+1}}{g_{n+1}}}^{2} =A R_{\loc}[u].
\]
It follows that
\begin{equation}\label{eq:main_rank_one_improvement}
 Q_{\alpha,\sigma}[u]\geq\frac{|u_1|^2}{A g_1^2}.
\end{equation}
By density and boundedness, \eqref{eq:main_rank_one_improvement}
extends to every $u\in\ell^2(\N)$. Thus
\[
    \widetilde W_n
    :=W_{\alpha,\sigma}(n)+\frac{\delta_{n,1}}{A g_1^2}
\]
is a Hardy weight strictly larger than $W_{\alpha,\sigma}$
at $n=1$. This proves non-criticality and completes
the proof of claim~(ii) of Theorem~\ref{thm:main}.

\subsection{Non-attainability}\label{subsec:non-att}

We prove non-attainability in $\ell^{2}(\N)$ of $W_{\alpha,\sigma}$ for every $1<\sigma<\alpha<3/2$.
Suppose $u\in\ell^2(\N)$ satisfies $Q_{\alpha,\sigma}[u]=0$, and approximate it by finitely supported sequences $u^{(k)}\to u$ in $\ell^2(\N)$ as $k\to\infty$. The boundedness of both operators $(-\lap_{\mathrm{D}})^{\sigma}$ and $W_{\alpha,\sigma}$ implies $Q_{\alpha,\sigma}[u^{(k)}]\to0$ for $k\to\infty$. For each fixed $n\in\N$, we infer from \eqref{eq:main_Q_remainders} that
\[
 a_n\abs{\frac{u_{n+1}^{(k)}}{g_{n+1}}
            -\frac{u_n^{(k)}}{g_n}}^2 \leq Q_{\alpha,\sigma}[u^{(k)}].
\]
Passing to the limit $k\to\infty$ yields $u_{n+1}/g_{n+1}=u_n/g_n$ for all
$n\geq1$. Thus, $u=cg$ for some $c\in\C$. Since $g_n\asymp n^p$ with $p>0$, $u\in\ell^2(\N)$ only if $c=0$, i.e. $u\equiv0$. This proves claim~(iii) of Theorem~\ref{thm:main}.

\subsection{Optimality at the critical parameter}\label{subsec:opt}

To prove claim (iv) of Theorem~\ref{thm:main}, it remains to show the optimality near infinity of $W_{\alpha,\sigma}$ for $\alpha=\alpha_\sigma^*$. So assume $\alpha=\alpha_\sigma^*$, which means $q=1$. Then, from~\eqref{eq:main_comparisons}, we have
\begin{equation}\label{eq:main_null_critical_mass}
 W_{\alpha,\sigma}(n)g_n^2\asymp n^{2p-2\sigma}=n^{-1}.
\end{equation}

Now we choose a function $\chi\in C^2(\R)$ with $0\leq\chi\leq1$ such that $\chi=0$ on
$(-\infty,1]\cup[4,\infty)$ and $\chi=1$ on $[2,3]$.
The same cut-off construction as in Step~1 of Subsection~\ref{subsec:crit} gives $u_n^N=0$ for $n\leq N$ and
$u_n^N=g_n$ for $N^2\leq n\leq N^3$. In the estimates of the
remainders, the outer sums now run up to $N^4+1$.
Since $\sum_{n=1}^{N^4+1}n^{-1}\leq C\log N$, the same argument
shows that~\eqref{eq:main_cutoff_energy} remains true, i.e.
\[
 Q_{\alpha,\sigma}[u^N]\leq\frac{C}{\log N}
\]
for all $N\geq2$. On the other hand, we infer from~\eqref{eq:main_null_critical_mass} that
\[
  \sum_{n=1}^{\infty}W_{\alpha,\sigma}(n)|u_n^N|^2\geq\sum_{n=N^2}^{N^3}W_{\alpha,\sigma}(n)g_n^2\geq C\sum_{n=N^2}^{N^3}\frac1n\geq C \log N.
\]
Therefore
\[
 \frac{\sum_{n=1}^\infty \overline{u_n^{N}} (-\lap_{\mathrm{D}})^\sigma u_n^{N}}
      {\sum_{n=1}^{\infty}W_{\alpha,\sigma}(n)|u_n^N|^2}
 =1+\frac{Q_{\alpha,\sigma}[u^N]}{\sum_{n=1}^{\infty}W_{\alpha,\sigma}(n)|u_n^N|^2}
 \longrightarrow1, \;\mbox{ as } N\to\infty.
\]
Since the supports of $u^N$ eventually avoid every finite set, this proves the optimality near infinity. The proof of Theorem~\ref{thm:main} is complete.
\qed

\section{Proof of the second main result: Theorem~\ref{thm:new_opt_weight}}
\label{sec:new_weight}

In this section, we prove Theorem~\ref{thm:new_opt_weight} using Proposition~\ref{prop:two-term-repre} for $\sigma>1$, and Lemma~\ref{lem:frac_lap_N_id} for $\sigma\leq1$, with the sequence $g$ given by~\eqref{eq:def_g_sigma}. As the proof proceeds similarly to that of Theorem~\ref{thm:main}, we keep it brief.

Put $g:=g^{(\sigma)}$, $p:=\sigma-1/2$, and
\[
 Q_\sigma[u]:=\sum_{n=1}^\infty \overline{u_n} (-\lap_{\mathrm{D}})^\sigma u_n - \sum_{n=1}^\infty \widehat{W}_{\sigma}^{\rm op}(n)\abs{u_n}^2.
\]
By Lemma~\ref{lem:lap_g_sigma}, $g\in\mathcal{D}_{\mathrm{D}}^{(\sigma)}$ and $\widehat{W}_{\sigma}^{\rm op}(n)=(-\lap_{\mathrm{D}})^\sigma g_n/g_n>0$ for all $n\in\N$, where one needs to apply the duplication formula~\cite[Eq.~(5.5.5)]{dlmf}
\[
    \Gamma(2z)=\frac{2^{2z-1}}{\sqrt{\pi}}\Gamma(z)\Gamma\bigg(z+\frac{1}{2}\bigg).
\]
From the explicit formulas \eqref{eq:new_opt_weight}, \eqref{eq:def_g_sigma}, and~\eqref{eq:Gamma_ratio_asymp}, one readily deduces the bounds
\begin{equation}\label{eq:new_comparisons}
 g_n\asymp n^{p},\qquad \widehat{W}_{\sigma}^{\rm op}(n)\asymp n^{-2\sigma},\qquad \widehat{W}_{\sigma}^{\rm op}(n)g_n^2\asymp n^{-1}.
\end{equation}

\subsection{Case \texorpdfstring{$1<\sigma<3/2$}{1 < sigma < 3/2}} Let $s:=\sigma-1$ and $h$, $V$, $a$, and $b$ be defined as in Section~\ref{sec:proofs}, see \eqref{eq:def_g,h,V} and \eqref{eq:def_a,b}, but with $g=g^{(\sigma)}$ in place of $\mathcal{I}_{\alpha}$. 
By Lemma~\ref{lem:lap_g_sigma}(ii), $h\in\mathcal{D}_{\mathrm{N}}^{(s)}$ and $h_n>0$ for all $n\in\N$. Moreover, bearing the identity~\eqref{eq:aux_rec_inproof2} in mind, Lemma~\ref{lem:lap_g_sigma}(vi) implies that 
\[
 V_n=\frac{(-\lap_{\mathrm{N}})^{s}h_n}{h_n}=\frac{(-\lap_{\mathrm{N}})^{s}g^{(s)}_n}{g^{(s)}_n}>0
\]
for all $n\in\N$. Hence Proposition~\ref{prop:two-term-repre} applies, and $Q_\sigma[u]=R_{\mathrm{N}}[u]+R_{\loc}[u]$ for $u\in C_0(\N)$ with non-negative remainders as in~\eqref{eq:main_Q_remainders}. 

Moreover, it is straightforward to check that we have the same bounds as in \eqref{eq:main_comparisons} with $W_{\alpha,\sigma}$ replaced by $\widehat{W}_\sigma^{\rm op}$, $p=\sigma-1/2$, and $q=1$. The kernel bound~\eqref{eq:main_kernel_bound} does not depend on $g$, and the function $r$ in~\eqref{eq:main_eta_formula} is now $r(x)=(x-1)/(\sigma-1/2)$, because $g_{n-1}/h_n=(n-1)/(\sigma-1/2)$; it again satisfies $|r(x)|\leq Cx$ and $|r'(x)|\leq C$. With these replacements, the arguments of Subsections~\ref{subsec:hardy}, \ref{subsec:non-att}, and~\ref{subsec:opt}, and of Steps~1 and~2 of Subsection~\ref{subsec:crit} (with $q=1$) apply verbatim.

\subsection{Case \texorpdfstring{$0<\sigma\leq1$}{0 < sigma <= 1}} Lemma~\ref{lem:frac_lap_N_id} with $\mathrm{X}=\mathrm{D}$ and $h=g$ yields
\[
 Q_\sigma[u]=R_{\mathrm{D}}[u]:=\sum_{1\leq n<m}b^{\mathrm{D}}_{m,n}\,g_mg_n\abs{\frac{u_m}{g_m}-\frac{u_n}{g_n}}^2,\qquad u\in C_0(\N),
\]
where $b^{\mathrm{D}}_{m,n}:=-(-\lap_{\mathrm{D}})^{\sigma}_{m,n}$ satisfies
\[
 0\leq b^{\mathrm{D}}_{m,n}\leq C|m-n|^{-1-2\sigma}\quad\mbox{ for } m\neq n,
 \qquad b^{\mathrm{D}}_{n+1,n}>0,
\]
see~\cite[Proposition~2.2]{das-fer_26}. Note that for $\sigma=1$, we have $b^{\mathrm{D}}_{m,n}=\delta_{|m-n|,1}$ for all $m\neq n$.
Thus $Q_\sigma\geq0$ on $C_0(\N)$, and the Hardy inequality follows by density as in Subsection~\ref{subsec:hardy}. Non-attainability follows as in Subsection~\ref{subsec:non-att}, with $a_n$ replaced by $b^{\mathrm{D}}_{n+1,n}g_{n+1}g_n>0$; note that $g\notin\ell^2(\N)$, since we have $2p=2\sigma-1>-1$ in \eqref{eq:new_comparisons}. By~\eqref{eq:new_comparisons}, the arguments of Step~2 of Subsection~\ref{subsec:crit} and of Subsection~\ref{subsec:opt} give criticality and optimality near infinity, once we show that $Q_\sigma[u^N]\leq C/\log N$ for $u^N=g\,\xi_N$ with both choices of $\chi$ used there. If $\sigma=1$, then the non-local remainder
\[
    R_{\mathrm{D}}[u^N]=\sum_{n=1}^\infty g_{n+1}g_n|\xi_N(n+1)-\xi_N(n)|^2
\]
is estimated as $R_{\loc}$ in~\eqref{eq:main_local_cutoff}, since $g_{n+1}g_n\asymp n$. If $\sigma<1$, then
\[
    |\xi_N(m)-\xi_N(n)|\leq \frac{C}{\log{N}}\log\frac{m}{n} \quad\mbox{ for } 1\leq n<m
\]
by~\eqref{eq:main_cutoff_derivatives}, and the estimate of $R_{\mathrm{N}}$ in Step~1 of Subsection~\ref{subsec:crit}, with $(h,s,\eta_N)$ replaced by $(g,\sigma,\xi_N)$, yields
\[
 R_{\mathrm{D}}[u^N]\leq\frac{C}{\log^2N}\sum_{n=1}^{N^4}n^{2p-2\sigma}\left[\frac1n\sum_{j=1}^{\infty}\frac{(1+j/n)^{p}}{(j/n)^{1+2\sigma}}\log^2(1+j/n)\right]\leq\frac{C}{\log N},
\]
because $2p-2\sigma=-1$ and the integral $\int_0^\infty(1+x)^{p}x^{-1-2\sigma}\log^2(1+x)\dd x$ converges for $0<\sigma<1$ and $p<2\sigma$.
\qed

\subsection*{Acknowledgement}

This research was supported by the CTU Future Fund (Project ID: CVUT-BrF-26-23028S).

\subsection*{AI-use disclosure}

While the core ideas and theoretical foundation of this article stem from our previous AI-unassisted work \cite{sta-wac_26}, AI tools (ChatGPT and Claude) were regularly consulted during the writing process of this manuscript. Specifically, these tools were used for text editing, mathematical proof assistance and verification, and improving the clarity of exposition. The final manuscript was authored, verified, and approved by the human authors, who bear full responsibility for its content.

\subsection*{Data availability}

Data sharing is not applicable to this article as no datasets
were generated or analysed during the current study.

\appendix
\section{A lower bound for the optimal Hardy weight}
\label{app:weight_lower_bound}

We conclude the paper with the proof of inequality \eqref{eq:W-lower-bound}.

\begin{proposition}
For every $\sigma\in(0,3/2)$ and $n\in\N$, we have
\[
    W_\sigma^{\mathrm{op}}(n)>
      4^\sigma\frac{\Gamma^2(\frac{3+2\sigma}{4})}{\Gamma^2(\frac{3-2\sigma}{4})}\frac{1}{n^{2\sigma}}.
\]
\end{proposition}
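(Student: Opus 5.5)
We have to show, after cancelling the constant, that
\[
F(n):=n^{2\sigma}\frac{\Gamma(n-a)\Gamma(n+b)}{\Gamma(n+a')\Gamma(n+b')}>1,
\]
where $a=\frac{1+2\sigma}{4}$, $a'=\frac{2\sigma-1}{4}$, $b=\frac{5-2\sigma}{4}$ and $b'=\frac{5+2\sigma}{4}$. These satisfy $a+a'=\sigma$ and $b'-b=\sigma$. We have $F(n)\to1$ as $n\to\infty$ by \eqref{eq:Gamma_ratio_asymp}. The plan is therefore to show that $n\mapsto \log F(n)$ is strictly decreasing on $\N$, or more robustly that $\phi(x):=\log F(x)$ is strictly decreasing on $[1,\infty)$. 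Strict decrease together with the limit $0$ gives $\phi(n)>0$.

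Concretely, $\phi(x)=2\sigma\log x+\log\Gamma(x-a)-\log\Gamma(x+a')+\log\Gamma(x+b)-\log\Gamma(x+b')$. This is well defined for $x\ge1$, since $x-a\ge 1-\frac{1+2\sigma}{4}>0$ when $\sigma<3/2$. Then
\[
\phi'(x)=\frac{2\sigma}{x}-\bigl[\psi(x+a')-\psi(x-a)\bigr]-\bigl[\psi(x+b')-\psi(x+b)\bigr].
\]
For the bracketed differences I use the integral representation $\psi(y)-\psi(z)=\int_0^\infty\frac{e^{-zt}-e^{-yt}}{1-e^{-t}}\,\dd t$, together with $\frac{1}{x}=\int_0^\infty e^{-xt}\,\dd t$. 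Since $a'+a=\sigma$ and $b'-b=\sigma$, this gives
\[
\phi'(x)=\int_0^\infty e^{-xt}\Bigl[2\sigma-\frac{e^{at}-e^{-a't}+e^{-bt}-e^{-b't}}{1-e^{-t}}\Bigr]\dd t .
\]
It then suffices to show that the bracket is $\le0$ for all $t>0$, and not identically zero. Equivalently, one needs
\[
K(t):=e^{at}-e^{-a't}+e^{-bt}-e^{-b't}-2\sigma(1-e^{-t})\ge0 .
\]

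The centring $a-a'=\tfrac12$ and $b+b'=\tfrac52$ suggests multiplying by $e^{-t/4}$ and using hyperbolic functions. The exponents become $\pm\sigma/2$ around the centres $1/4$ and $-5/4$ shifted by $1/4$, which gives
\[
e^{-t/4}\bigl(e^{at}-e^{-a't}\bigr)=2\sinh(\sigma t/2),\qquad e^{-t/4}\bigl(e^{-bt}-e^{-b't}\bigr)=2e^{-3t/2}\sinh(\sigma t/2).
\]
The claim thus becomes
\[
2\sinh(\sigma t/2)\bigl(1+e^{-3t/2}\bigr)\ge 2\sigma e^{-t/4}(1-e^{-t}),
\]
that is,
\[
\sinh(\sigma t/2)\cosh(3t/4)\ge \sigma\sinh(t/2)\quad\text{for } t>0.
\]

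This elementary inequality is the main obstacle, and it must hold uniformly in $\sigma\in(0,3/2)$. I would first try the bound $\sinh(\sigma t/2)\ge \sigma\,\frac{t}{2}\cdot\frac{\sinh(\sigma t/2)}{\sigma t/2}$, using that $y\mapsto\sinh(y)/y$ is increasing. This reduces the claim to the worst case $\sigma\to0$, namely $\frac{t}{2}\cosh(3t/4)\ge\sinh(t/2)$. That inequality holds strictly for $t>0$: compare Taylor series termwise, $\frac{t}{2}\cosh(3t/4)=\sum \frac{t^{2k+1}(3/4)^{2k}}{2(2k)!}$ against $\sinh(t/2)=\sum\frac{t^{2k+1}}{2^{2k+1}(2k+1)!}$, where each coefficient on the left dominates. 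Hence $K>0$, so $\phi'<0$ on $[1,\infty)$, and the proposition follows. If the reduction were off by a sign somewhere, the fallback is to check $K\ge0$ directly via the power series of $K$ in $t$, whose low-order coefficients vanish by the relations $a+a'=\sigma$ and $b'-b=\sigma$.
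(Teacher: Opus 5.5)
Your proposal is correct and follows essentially the same route as the paper: the same logarithmic auxiliary function, the same digamma integral representation, and the same reduction to $\sinh(\sigma t/2)\cosh(3t/4)>\sigma\sinh(t/2)$ via $\sinh(\sigma t/2)>\sigma t/2$. The only cosmetic differences are that the paper finishes with $\sinh y<y\cosh y<y\cosh(3y/2)$ instead of your termwise Taylor comparison, and that it works on $x>a$ rather than on $x\ge 1$.
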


\begin{proof}
Let $\sigma\in(0,3/2)$. Recalling formula~\eqref{eq:wop_explicit}, the desired inequality can be reformulated as
\begin{equation}
    n^{2\sigma}\,\frac{\Gamma(n-\frac{1+2\sigma}{4})\Gamma(n+\frac{5-2\sigma}{4})}{\Gamma(n-\frac{1-2\sigma}{4})\Gamma(n+\frac{5+2\sigma}{4})}
    >1, \quad n\in\N.
\label{eq:ineq_aux_append}
\end{equation}

Put $a:=(1+2\sigma)/4$, $b:=(1-2\sigma)/4$, and define the auxiliary function
\[
    F(x):=\log\left(x^{2\sigma}\,\frac{\Gamma(x-a)\Gamma(x+1+b)}{\Gamma(x-b)\Gamma(x+1+a)}\right)\quad \mbox{ for } x>a. 
\]
All Gamma arguments are positive for $x>a>b$.
Moreover, the Gamma ratio asymptotics \eqref{eq:Gamma_ratio_asymp} imply that $F(x)\to0$ for $x\to\infty$. In the rest of the proof, we show that $F'(x)<0$, which implies $F(x)>0$ for all $x>a$. The inequality \eqref{eq:ineq_aux_append} follows since $a<1$.

In the following, $\psi:=\Gamma'/\Gamma$ is the digamma function. 
Using the integral identity \cite[Eq.~(5.9.12)]{dlmf}
\[
    \psi(v)-\psi(u)=\int_0^\infty\frac{e^{-ut}-e^{-vt}}{1-e^{-t}}\,\dd t,
    \quad u,v>0,
\]
we obtain
\begin{align*}
    F'(x)&=\frac{2\sigma}{x}+\psi(x-a)-\psi(x-b)+\psi(x+1+b)-\psi(x+1+a)\\
    &=\int_0^\infty e^{-xt}\left(2\sigma-\frac{e^{at}-e^{bt}+e^{-(1+b)t}-e^{-(1+a)t}}{1-e^{-t}}\right)\dd t\\
    &=2\int_0^\infty e^{-xt}\left(\sigma-\frac{\cosh(3t/4)\sinh(\sigma t/2)}{\sinh(t/2)}\right)\dd t.
\end{align*}
These integrals converge absolutely: the expression in parentheses in the last integral extends continuously to $t=0$, and is $\mathcal{O}(e^{at})$ as $t\to\infty$, whereas $x>a$. Using, for $y>0$, the inequalities
\[
    \sinh(\sigma y)>\sigma y
    \quad\mbox{ and }\quad
    \sinh y=\int_0^y\cosh r\,\dd r <y\cosh y<y\cosh(3y/2),
\]
we may estimate
\[
    \frac{\cosh(3y/2)\sinh(\sigma y)}{\sinh y}
    >\sigma.
\]
Taking $y=t/2$, we see that the integrand that defines $F'(x)$ is strictly negative for all $t>0$.
Consequently, $F'(x)<0$ for all $x>a$, as claimed. The proof is complete.
\end{proof}

\bibliographystyle{acm}

\end{document}